%% file: main.tex
\documentclass[a4paper,11pt]{article}
\usepackage[T1]{fontenc}
\usepackage[utf8]{inputenc}
\usepackage{lmodern}
\usepackage{bm}
\usepackage{geometry}
\usepackage{amsthm}
\usepackage{mathtools}
\usepackage[backend=bibtex,style=numeric,sorting=none]{biblatex}
\title{Adjoint-Compat ible Surrogates of the Expected Information Gain for Optimal Experimental Design}
\author{
Luc de Montella\thanks{Otto von Guericke University Magdeburg, Magdeburg, Germany. 
Email: luc.demontella@ovgu.de}
\and
Sebastian Sager\thanks{Otto von Guericke University Magdeburg, Magdeburg, Germany. 
Max Planck Institute for Dynamics of Complex Technical Systems, Magdeburg, Germany. 
Email: sager@ovgu.de}
}
\date{}

\usepackage{amsmath,amssymb,amsfonts}

\newtheorem{proposition}{Proposition}

\newtheorem{lemma}{Lemma}
\newtheorem{remark}{Remark}

\usepackage[colorlinks=true,linkcolor=blue,urlcolor=blue,citecolor=blue]{hyperref}

\newcommand{\E}{\mathbb{E}}

\usepackage{xparse}

\providecommand{\email}[1]{\texttt{#1}}

\makeatletter
\providecommand{\@keywords}{}
\providecommand{\@mscodes}{}

\@ifundefined{keywords}{
  \NewDocumentEnvironment{keywords}{+b}
    {\gdef\@keywords{#1}}
    {}
}{}

\@ifundefined{MSCcodes}{
  \NewDocumentEnvironment{MSCcodes}{+b}
    {\gdef\@mscodes{#1}}
    {}
}{}
\makeatother
\usepackage{booktabs}
\usepackage{booktabs}
\usepackage{multirow}
\input{utils/plots}

\usepackage{pgfplotstable}
\pgfplotsset{compat=1.18}
 
\pgfplotstableread[col sep=comma]{csv/harmonic_similar_eig.csv}\harmsim
\pgfplotstableread[col sep=comma]{csv/harmonic_uneven_eig.csv}\harmunev
\pgfplotstableread[col sep=comma]{csv/lv_unimodal_eig.csv}\lvuni
\pgfplotstableread[col sep=comma]{csv/lv_bimodal_eig.csv}\lvbi
 
\newcommand{\V}[3]{%
  \pgfplotstablegetelem{#2}{#3}\of#1%
  \pgfplotsretval
}
\newcommand{\Ve}[2]{%
  \pgfplotstablegetelem{#2}{best}\of#1%
  \edef\tmpbest{\pgfplotsretval}%
  \pgfplotstablegetelem{#2}{EIG}\of#1%
  \ifnum\tmpbest=1\relax
    \textbf{\pgfplotsretval}%
  \else
    \pgfplotsretval
  \fi
}

\pgfplotstableread[col sep=comma]{csv/scaling_N_inst.csv}\Ninst
\pgfplotstableread[col sep=comma]{csv/scaling_N_tilt.csv}\Ntilt
\pgfplotstableread[col sep=comma]{csv/scaling_Q_inst.csv}\Qinst
\pgfplotstableread[col sep=comma]{csv/scaling_Q_tilt.csv}\Qtilt
 
\pgfplotsset{
  scaling common/.style={
    grid=major,
    grid style={line width=0.2pt, gray!25},
    tick label style={font=\footnotesize},
    label style={font=\small},
    title style={font=\small\bfseries},
    width=5.8cm, height=5cm,
    legend style={font=\scriptsize, at={(0.03,0.97)}, anchor=north west,
                  draw=none, fill=none},
  },
  inst/.style={blue!80!black, thick, mark=o, mark size=1.8pt},
  tilt/.style={orange!80!black, thick, mark=triangle*, mark size=2.2pt},
  inst dashed/.style={inst, dashed, thin, opacity=0.6},
  tilt dashed/.style={tilt, dashed, thin, opacity=0.6},
}

\title{Adjoint-Compatible Surrogates of the Expected Information Gain for Optimal Experimental Design}

\author{
Luc de Montella\thanks{Otto von Guericke University Magdeburg, Magdeburg, Germany
(\email{luc.demontella@ovgu.de}, \email{sager@ovgu.de}).}
\and
Sebastian Sager\footnotemark[1]\thanks{Max Planck-Institute for Dynamics of Complex Technical Systems, Magdeburg, Germany}
}

\usepackage{algorithm}
\usepackage{algorithmic}

\begin{document}

\maketitle

\begin{abstract}
We consider optimal experimental design for parameter estimation in dynamical systems governed by controlled ordinary differential equations. In such problems, Fisher-based criteria are attractive because they lead to time-additive objectives compatible with adjoint-based optimal control, but they remain intrinsically local and may perform poorly under strong nonlinearities or non-Gaussian prior uncertainty. By contrast, the expected information gain (EIG) provides a principled Bayesian objective, yet it is typically too costly to evaluate and does not naturally admit an adjoint-compatible formulation.
In this work, we introduce adjoint-compatible surrogates of the EIG based on an exact chain-rule decomposition and tractable approximations of the posterior distribution of the unknown parameter. This leads to two surrogate criteria: an instantaneous surrogate, obtained by replacing the posterior with the prior, and a Gaussian tilting surrogate, obtained by reweighting the prior through a design-driven quadratic information factor. 
We establish theoretical properties of these surrogates, including exactness of the Gaussian tilting surrogate in the linear-Gaussian setting, and illustrate their behavior on benchmark controlled dynamical systems. The results show that the proposed surrogates remain competitive in nearly Gaussian regimes and provide clearer benefits over Fisher-based designs when prior uncertainty is non-Gaussian or multimodal.
\end{abstract}

%

\makeatletter
\ifx\@keywords\@empty\else
\par\smallskip
\noindent\textbf{Keywords.} \@keywords\par
\fi

\ifx\@mscodes\@empty\else
\smallskip
\noindent\textbf{MSC codes.} \@mscodes\par
\fi
\makeatother

\section{Introduction}

\input{sections/introduction}

\section{Problem Formulation}\label{sec:problem_formulation}

\input{sections/general_settings}

\section{Adjoint-compatible surrogates of the expected information gain}\label{sec:surrogates}

\input{sections/eig_approximations}

\section{Error Analysis of the Surrogate Objectives}\label{sec:theory}

\input{sections/theoretical_results}

\section{Adjoint-compatible optimal control formulation}\label{sec:optim_problem_formulation}

\input{sections/in_practice}

\section{Numerical Results}\label{sec:numerical_results}

\input{sections/numerical_results}

\section{Conclusion}

\input{sections/conclusion}

\section*{Acknowledgement}

The authors gratefully acknowledge the funding by the European Regional Development Fund (ERDF) within the programme Research and Innovation - Grant Number ZS/2023/12/182075 and by the Deutsche Forschungsgemeinschaft (DFG, German Research Foundation) via grant 314838170, GRK 2297 MathCoRe.

\appendix

\section{Technical Lemmas}

\input{sections/appendix}

\printbibliography


\end{document}

%% file: utils/plots.tex
\usepackage{xcolor}
\usepackage{pgfplots}
\usepgfplotslibrary{groupplots,statistics}
\usepackage{pgfplotstable}
\pgfplotsset{compat=1.18}

\newcommand{\AddBoxFromRow}[1]{%
  \pgfplotstablegetelem{#1}{whislo}\of{\datatable}\edef\WhisLo{\pgfplotsretval}%
  \pgfplotstablegetelem{#1}{q1}\of{\datatable}\edef\QOne{\pgfplotsretval}%
  \pgfplotstablegetelem{#1}{median}\of{\datatable}\edef\MedianVal{\pgfplotsretval}%
  \pgfplotstablegetelem{#1}{q3}\of{\datatable}\edef\QThree{\pgfplotsretval}%
  \pgfplotstablegetelem{#1}{whishi}\of{\datatable}\edef\WhisHi{\pgfplotsretval}%
  \pgfplotstablegetelem{#1}{mean}\of{\datatable}\edef\MeanVal{\pgfplotsretval}%
  \edef\TempPlot{%
    \noexpand\addplot+[
      black,
      line width=0.5pt,
      boxplot/draw direction=y,
      boxplot prepared={
        lower whisker=\WhisLo,
        lower quartile=\QOne,
        median=\MedianVal,
        upper quartile=\QThree,
        upper whisker=\WhisHi,
        average=\MeanVal
      },
      boxplot/every box/.style={draw=black, fill=none},
      boxplot/every whisker/.style={draw=black, line width=0.5pt},
      boxplot/every median/.style={draw=orange!90!black, line width=1pt},
      boxplot/every average/.style={
        mark=triangle*,
        mark size=2.2pt,
        draw=green!50!black,
        fill=green!50!black
      }
    ] coordinates {};
  }%
  \TempPlot
}

\newcommand{\ErrorBoxplotFigure}[3]{%
  \pgfplotstableread[col sep=comma]{#1}\datatable
  \begin{figure}[h]
  \centering
  \begin{tikzpicture}
  \begin{groupplot}[
      group style={group size=3 by 1, horizontal sep=1.2cm},
      width=0.33\textwidth,
      height=0.38\textwidth,
      ymin=0,
      tick pos=left,
      xtick style={draw=none},
      boxplot/draw direction=y,
      enlarge x limits=0.1,
      grid=none,
      axis line style={black},
      tick style={black},
      xtick={1,2,3,4},
      xticklabels={A-optimal,Linearised,EIG-instantaneous,EIG-tilting},
      x tick label style={rotate=40, anchor=north east, font=\small},
      tick label style={font=\small},
      label style={font=\small},
  ]

  \nextgroupplot[
      title={Overall error},
      ylabel={Relative error (\%)},
  ]
  \AddBoxFromRow{0}
  \AddBoxFromRow{6}
  \AddBoxFromRow{9}
  \AddBoxFromRow{12}

  \nextgroupplot[
      title={Error on first component},
  ]
  \AddBoxFromRow{1}
  \AddBoxFromRow{7}
  \AddBoxFromRow{10}
  \AddBoxFromRow{13}

  \nextgroupplot[
      title={Error on second component},
  ]
  \AddBoxFromRow{2}
  \AddBoxFromRow{8}
  \AddBoxFromRow{11}
  \AddBoxFromRow{14}

  \end{groupplot}
  \end{tikzpicture}
  \caption{#2}
  \label{#3}
  \end{figure}
}

\newcommand{\ErrorBoxplotFigureWithSurrogateZoom}[4]{%
  \pgfplotstableread[col sep=comma]{#1}\datatable
  \begin{figure}[h]
  \centering
  \begin{tikzpicture}
  \begin{groupplot}[
      group style={group size=4 by 1, horizontal sep=0.9cm},
      width=0.255\textwidth,
      height=0.36\textwidth,
      ymin=0,
      tick pos=left,
      xtick style={draw=none},
      boxplot/draw direction=y,
      enlarge x limits=0.12,
      grid=none,
      axis line style={black},
      tick style={black},
      xtick={1,2,3,4},
      xticklabels={A-optimal,Linearised,EIG-inst.,EIG-tilt},
      x tick label style={rotate=40, anchor=north east, font=\small},
      tick label style={font=\small},
      label style={font=\small},
  ]

  \nextgroupplot[
      title={Overall},
      ylabel={Relative error (\%)},
  ]
  \AddBoxFromRow{0}
  \AddBoxFromRow{6}
  \AddBoxFromRow{9}
  \AddBoxFromRow{12}

  \nextgroupplot[
      title={First component},
  ]
  \AddBoxFromRow{1}
  \AddBoxFromRow{7}
  \AddBoxFromRow{10}
  \AddBoxFromRow{13}

  \nextgroupplot[
      title={Second component},
  ]
  \AddBoxFromRow{2}
  \AddBoxFromRow{8}
  \AddBoxFromRow{11}
  \AddBoxFromRow{14}

  \nextgroupplot[
      title={Overall (detailed)},
      xtick={1,2},
      xticklabels={EIG-inst.,EIG-tilt},
      ymax=#4,
      enlarge x limits=0.35,
  ]
  \AddBoxFromRow{9}
  \AddBoxFromRow{12}

  \end{groupplot}
  \end{tikzpicture}
  \caption{#2}
  \label{#3}
  \end{figure}
}

%% file: sections/introduction.tex
Optimal experimental design (OED) is concerned with determining experimental strategies that maximize the information gained about unknown parameters under limited experimental resources. Since the seminal works of Kiefer, Lindley, and others~\cite{Kiefer1958,Lindley1956,Stone1959}, OED has developed into a broad field with applications across many areas~\cite{Martijn2005,Huan_Jagalur_Marzouk_2024}.

In the present work, we focus on experimental design for controlled dynamical systems. More precisely, we consider experiments governed by controlled ordinary differential equations, where the design variables include both the observation times and the control applied to the system. Such settings are common, for instance, in applications such as systems biology and process engineering~\cite{Kreutz2009,Franceschini2008}. 
In this setting, we restrict attention to non-adaptive designs, that is, designs that must be fully specified before the experiment starts.

The Fisher Information Matrix {\color{blue}(FIM)} and the design criterion derived from it provide a natural tool to address this problem. In particular, their time-additive structure makes it possible to cast the design problem as an optimal control problem of Bolza type. This, in turn, allows one to use tools from optimal control theory, such as Pontryagin’s principle for the analysis~\cite{Sager2013}, as well as efficient numerical methods~\cite{Korkel2004}. 

However, FIM-based design criteria remain intrinsically local, as they quantify information only around a nominal parameter value. They can therefore lead to inefficient designs in strongly nonlinear settings or under non-Gaussian or multimodal priors. Bayesian optimal design addresses these limitations by defining the design objective in terms of the posterior distribution. Among these criteria, the expected information gain (EIG) provides a natural information-theoretic objective~\cite{chalonerverdinelli1995,Ryan2016}.

Yet, despite its conceptual appeal, the EIG is expensive to evaluate and optimize, since its computation typically involves nested expectations over future observations and posterior distributions~\cite{Rainforth2018}. Methods such as variational approaches~\cite{Foster2019} can reduce part of this burden by improving EIG estimation for a given design, but they generally address the evaluation of the criterion rather than the outer optimization difficulties arising here. In our setting, an additional difficulty is structural: the time-additive form exploited by FIM-based criteria is generally lost. Moreover, because the control shapes the entire system trajectory, early decisions affect both the subsequent state evolution and the information content of later measurements, so the design problem does not, in general, admit the kind of sequential or greedy decomposition that is possible in other settings~\cite{Krause2008, maio2025}.

Despite these difficulties, several works have addressed Bayesian design for nonlinear dynamical systems. Busetto et al. provided an early information-theoretic Bayesian design approach for model selection in nonlinear dynamical systems~\cite{Busetto2009}. More recently, Overstall et al. showed that fully Bayesian design can be carried out for ODE models by combining Monte Carlo expected-utility approximation with probabilistic ODE solvers and optimization over finite-dimensional sampling designs~\cite{Overstall2019}. Huan and Marzouk, and later Paulson et al., further improved the tractability of EIG-based design through polynomial-chaos surrogates, using global approximations in the former case and design-dependent local ones in the latter~\cite{huanmarzouk2013,Paulson2019}. 
Recent work has also explored alternative ways of using local score or
information structure in experimental design. For example, Qi and Baker combine
Fisher-based local sensitivity measures with Sobol'-based global sensitivity
measures to study how observation timing and correlated noise affect
parameter-estimation uncertainty in dynamic OED~\cite{qi2025observationnoise}.
Other approaches use functional inequalities, such as logarithmic Sobolev
bounds, to obtain tractable lower bounds on mutual information for greedy
selection from a finite candidate set~\cite{li2024logsobolev}. The present
work is different in scope and mechanism: we use Fisher information neither
as the final scalar design criterion nor as a mutual-information bound, but
as a deterministic tilting mechanism inside an EIG-motivated surrogate
designed for adjoint-compatible dynamic optimization.

In the present work, we take a different route. Rather than using surrogate models to accelerate the evaluation of an EIG-based objective, we introduce surrogate design criteria that recover temporal additivity and are therefore compatible with adjoint-based optimization in large-scale non-adaptive control problems. Our goal is thus not to speed up the computation of the EIG itself, but to obtain an optimization problem with exploitable structure while remaining close to the original Bayesian objective. To this end, we derive two criteria from a time decomposition of the EIG via the chain rule for mutual information, combined with posterior approximations chosen to recover the adjoint-compatible structure of the FIM-based optimal design problem.

The first surrogate criterion considered is myopic and replaces each intermediate posterior distribution by the prior, so that each new observation is quantified as if it were the first one. The second relies on a tilted posterior approximation driven by the Fisher information matrix, in order to account for directions already explored while preserving a tractable dynamical structure. Importantly, the Fisher information matrix is used here only as an auxiliary state variable, and not as the final design criterion.

We provide a theoretical analysis of both criteria, identifying regimes in which the myopic surrogate is effective as well as situations in which it fails by counting already acquired information as new. We also show that the tilting surrogate is exact in the linear-Gaussian setting. Finally, numerical experiments on simulated examples illustrate the benefits of the proposed approach over standard FIM-based criteria, especially under non-Gaussian or multimodal priors, and show how the tilting mechanism improves on the myopic limitations.

In order to obtain a computationally tractable formulation, we discretize the
prior and observation-noise expectations using fixed weighted-node integration
rules. This yields a simple, fully differentiable optimal-control problem that
can be solved by standard nonlinear programming solvers. The main limitation
of this choice is dimensional: standard tensor-product rules quickly become
impractical as the number of uncertain parameters or active observation-noise
variables increases. Quasi-Monte Carlo sequences, which are used in our
numerical experiments, can improve practical scaling when the
integrands have low effective dimension~\cite{dick2013high}, and sparse grids
offer another possible alternative~\cite{bungartz2004sparse}. For 
genuinely high-dimensional parameter spaces, however, the proposed surrogates
would need to be combined with dimension-reduction, variational, or
transport-based approximations, which is outside the scope of this work.
Nevertheless, many OED problems for dynamical systems, such as pharmacokinetic sampling design ~\cite{mentre1997optimal}, chemical kinetics~\cite{korkel2004numerical}, and experimental epidemic design~\cite{pagendam2013optimal}, involve a relatively small number of unknown parameters and therefore fall within the regime addressed here.

The remainder of the paper is organized as follows. Section~\ref{sec:problem_formulation} introduces the controlled experimental design problem and the Bayesian design criteria considered in this work. Section~\ref{sec:surrogates} presents the proposed surrogate objectives. Section~\ref{sec:theory} is devoted to their theoretical analysis. Section~\ref{sec:optim_problem_formulation} details the optimization problems associated with the surrogates. Section~\ref{sec:numerical_results} presents numerical experiments.

%% file: sections/general_settings.tex
\subsection{Model and observation setting}

\paragraph{Dynamical model}

Let $x(t)\in\mathbb{R}^{n_x}$ denote the state of a dynamical system evolving over the time interval $[0,T]$.
The system dynamics are governed by the controlled ordinary differential equation
\begin{equation}
    \dot x(t) = f\bigl(x(t),u(t),\theta,t\bigr), \qquad t\in[0,T],
\end{equation}
where $u(t)\in\mathbb{R}^{n_u}$ is a control input chosen by the experimenter and
$\theta\in\mathbb{R}^{n_\theta}$ is an unknown but fixed parameter vector. We assume that $f$ is sufficiently regular so that, for any measurable control $u(\cdot)$ taking values in a compact set $U$ and any $\theta$, the system admits a unique solution for the initial condition $x(0)=x_0$, which we denote by $x(t;u,\theta)$.

\paragraph{Sampling design and observation model}

We begin by considering that observations can only be collected on a finite grid of candidate sampling times
\(
    \mathcal{T}_M := \{t_1,\dots,t_M\}\subset[0,T],
\)
reflecting practical limitations on data acquisition.

At each time $t_i$, the experimenter may activate any of the $n_{\mathrm{exp}}$ sensors (or observation channels). When sensor $d\in\{1,\dots,n_{\mathrm{exp}}\}$ is active at time $t_i$, the observation follows the additive-noise model
\begin{equation}\label{eq:obs-model}
  y_{i,d}=h_d\!\left(x(t_i;u,\theta)\right)+\varepsilon_{i,d},
\end{equation}
where $h_d:\mathbb{R}^{n_x}\to\mathbb{R}^{n_y}$ is the $d$-th observation function and $(\varepsilon_{i,d})$ are independent random variables with density $p_{\varepsilon_{i,d}}$. The noise is assumed independent of the state and parameter, so that the observations $y_{i,d}$ are conditionally independent across sensors and time given $\theta$. In the remainder of the paper, we denote the full set of potential observations by $y = (y_{i,d})_{i,d}$ and the subset collected at time $t_i$ by $y_i = (y_{i,d})_{d}$.

The experimental design consists of:
\begin{itemize}
    \item a control function $u(\cdot)$ driving the system dynamics;
    \item sampling weights $w_d:\mathcal{T}_M\to\{0,1\}$, where $w_d(t_i)$ indicates whether sensor $d$ is activated at time $t_i$. We use the shorthand $w_{i,d} := w_d(t_i)$.
\end{itemize}
We denote by
\(
w = (w_{i,d})_{1\le i \le M,\; 1\le d \le n_{\mathrm{exp}}}
\) and \(
w_i = (w_{i,d})_{1\le d \le n_{\mathrm{exp}}}
\)
the full set of sampling weights and the vector of weights at time $t_i$, respectively. Throughout this work, we restrict attention to non-adaptive designs: the weights and control cannot depend on the realization of the observations. We also assume that the total number of measurements is limited by a budget constraint
\[
    \sum_{i=1}^M \sum_{d=1}^{n_{\text{exp}}} w_{i,d} \le K.
\]
To account for inactive sensors, we extend the observation space to 
\(
y_i \in (\mathbb{R}^{n_y} \cup \{c\})^{n_{\mathrm{exp}}},
\)
where $c \notin \mathbb{R}^{n_y}$ is a cemetery value used to encode inactive
sensors. 
Under this convention, inactive sensors produce the deterministic value $c$,
while active sensors take values in $\mathbb{R}^{n_y}$. We denote by
\(
\eta = (\eta_1,\dots,\eta_{n_{\mathrm{exp}}}) \in \{0,1\}^{n_{\mathrm{exp}}}
\)
the sensor configuration, where $\eta_d = 1$ indicates that sensor $d$
is active and $\eta_d = 0$ that it is inactive. The configuration can
then be recovered from the observation vector as
\begin{equation} \label{eq:config_y}
\eta_d(y_i) := \mathbf{1}_{\{y_{i,d} \neq c\}} .
\end{equation}
Given a sampling policy $w_i$, we define a configuration weight $\pi_\eta(w_i)$ measuring the consistency between the configuration $\eta$ and the design as
\begin{equation} \label{eq:configuration_weight}
\pi_\eta(w_i)
:=
\prod_{d=1}^{n_{\mathrm{exp}}}
w_{i,d}^{\eta_d}
(1-w_{i,d})^{1-\eta_d}.
\end{equation}
By conditional independence of the sensors given the parameter,
the predictive density of the observations at time $t_i$ is defined as
\begin{equation} \label{eq:likelihood_def}
p(y_i \mid \theta, w, u)
:=
\pi_{\eta(y_i)}(w_i)
\prod_{d:\eta_d(y_i)=1}
p_d(y_{i,d} \mid \theta, u).
\end{equation}
where
\(
p_d(y_{i,d}\mid\theta,u)
=
p_{\varepsilon_{i,d}}
\!\left(y_{i,d}-h_d(x(t_i;u,\theta))\right).
\)
We assume throughout that all relevant densities exist and that all necessary
integrability conditions hold, so that all information-theoretic quantities
considered below are well defined and finite. Note that since the design variables are deterministic, conditioning on $w$ and $u$ is purely parametric.
 
\subsection{Design objectives}

\paragraph{Fisher-based objectives}

A classical approach to optimal experimental design for parameter estimation relies on the Fisher Information Matrix (FIM) \cite{Fisher1922,Atkinson2007}. The FIM can be expressed as the covariance of the score function, or equivalently as the expected negative Hessian of the log-likelihood under standard regularity conditions. It quantifies the local sensitivity of the likelihood with respect to the unknown parameters through a quadratic approximation of the log-likelihood around a nominal parameter value. In this framework, criteria such as D-, A-, or E-optimality are commonly used to reduce the matrix information measure to a scalar \cite{pukelsheim2006optimal}.

Under standard assumptions, the FIM for a design $(w,u)$ is additive in time, with instantaneous information contributions from each measurement determined by state sensitivities and observation noise. In dynamical systems, this property allows Fisher-based criteria to be formulated as optimal control problems by augmenting the state with sensitivity equations, the accumulated information matrix, and variables enforcing measurement budgets \cite{PronzatoPazman2013,WalterPronzato1997}. The objective then takes the form of a time-integrated functional, for which gradients can be computed efficiently using adjoint methods. This structure makes Fisher-based design particularly attractive for large-scale controlled dynamical systems. Following \cite{Sager2013}, this leads to the optimal control problem
\begin{equation} 
\begin{aligned} 
\min_{u(\cdot), w(\cdot)} \quad & \varphi\bigl(F(T)\bigr) \\
\text{subject to} \quad
& \dot x(t) = f\bigl(x(t),u(t),\theta,t\bigr), \\
& \dot{G}(t) = \frac{\partial f}{\partial x}\bigl(x(t), u(t), \theta, t\bigr) G(t)
               + \frac{\partial f}{\partial \theta}\bigl(x(t), u(t), \theta, t\bigr), \\
& \dot{F}(t) = \sum_{d=1}^{n_{\mathrm{exp}}}
w_d(t)\, \left[\frac{\partial h_d}{\partial x}\bigl(x(t)\bigr) G(t)\right]^\top
R_d ^{-1} \left[\frac{\partial h_d}{\partial x}\bigl(x(t)\bigr) G(t)\right], \\
& \dot{z}(t) = \sum_{d=1}^{n_{\mathrm{exp}}}w_d(t), 
x(0) = x_0, \quad G(0) = 0, \quad F(0) = 0, \\ 
& z(0) = 0, 
u(t) \in U, \quad w_d(t) \in [0,1], \quad
0 \le K - z(T), 
\end{aligned} \label{eq:fim_optimal_control}
\end{equation}
where $\theta$ is fixed at a nominal value, typically  the prior mean or mode, $G$ denotes the state sensitivity with respect to the parameters, $F$ the accumulated Fisher information matrix, $\varphi$ a scalar optimality criterion applied to the FIM, $R_d$ the noise covariance associated with sensor $d$, and $z$ an auxiliary variable enforcing a measurement budget constraint with upper bound $K$.

This formulation is obtained by relaxing the sampling decision functions from discrete mappings $w_d : \mathcal{T}_M \to \{0,1\}$ to measurable functions $w_d : [0,T] \to [0,1]$. Such continuous relaxations are well established in optimal experimental design, as they convexify the design space and enable efficient gradient-based optimization methods \cite{Kiefer2018}.

Despite their computational efficiency and widespread use, Fisher-based criteria are intrinsically local, as they rely on a nominal parameter value and on a quadratic approximation of the log-likelihood that implicitly assumes near-Gaussian posterior behavior. Consequently, they can be unreliable in strongly nonlinear settings, under large prior uncertainty, or when multiple parameter modes are plausible \cite{PronzatoPazman2013,chalonerverdinelli1995}. Moreover, many Fisher-based criteria assume that the information matrix is nonsingular, an assumption that may fail when parameters are poorly identifiable or the experiment provides limited information. As a result, Fisher-based designs may fail to maximize the information gained from an experiment, which has led to the development of information-theoretic criteria that directly quantify expected uncertainty reduction, such as the expected information gain.

\paragraph{Expected information gain}

To overcome the limitations of Fisher information--based criteria, one can instead adopt
design objectives that account for the full experimental setup by explicitly considering the
posterior distribution \cite{RaiffaSchlaifer1961,Lindley1972}. By Bayes' rule, the posterior combines prior knowledge with the
observation likelihood and thus captures the overall informativeness of the experiment.
Criteria based on this principle are commonly referred to as the framework of Bayesian
optimal experimental design \cite{chalonerverdinelli1995}.

Among Bayesian optimal design criteria, the expected information gain (EIG) is one of the most widely
used objectives \cite{Ryan2003}. From an information-theoretic perspective, it quantifies the expected
reduction in uncertainty about the unknown parameter induced by the experiment. Formally,
the EIG is defined as the mutual information between the parameter $\theta$ and the
observation data $y$, conditioned on the design variables
$(w,u)$:
\[
J_{\mathrm{EIG}}(w,u)
= I(\theta; y \mid w,u)
= \mathbb{E}
\left[
\log \frac{p(y \mid \theta,w,u)}{p(y \mid w,u)}
\right],
\]
where the expectation is taken with respect to the joint distribution
$p_0(\theta)\,p(y \mid \theta,w,u)$.

Equivalently, the EIG can be interpreted as the expected Kullback--Leibler divergence between
the posterior distribution and the prior.

In contrast to Fisher-based criteria, the expected information gain remains valid in nonlinear settings and under non-Gaussian posterior distributions. However, this broader applicability comes at a significant computational cost: evaluating the
EIG requires high-dimensional expectations over both parameters and observations and
typically involves repeated posterior updates \cite{Ryan2016,huanmarzouk2013}. Moreover, the resulting objective does not, in general, admit a tractable time-additive structure or a formulation compatible with
adjoint-based optimal control methods.
This loss
of structure severely limits its direct use in controlled dynamical settings and motivates the
development of tractable approximations, which we address in the next section.

%% file: sections/eig_approximations.tex
\subsection{Chain-rule decomposition of the expected information gain}

The main obstacle to using the expected information gain in adjoint-based optimal control
frameworks is the lack of a tractable time-additive structure. In contrast to Fisher-based criteria, the
EIG couples all observations through the posterior distribution and therefore cannot be
directly expressed as an integral of instantaneous contributions. To recover a temporally
resolved representation, we rely on the chain rule for mutual information \cite{coverthomas1991}.

Collecting the observations up to time $t_i$ in the vector
\(
y_{1:i} := (y_1,\dots,y_i),
\)
the expected information gain can be decomposed exactly with respect to time as
\begin{equation}
  J_\mathrm{EIG}(w,u)
  =
  \sum_{i=1}^{M}
  I\bigl(\theta; y_i \,\big|\, y_{1:i-1},w,u\bigr).
  \label{eq:MI-chain-rule}
\end{equation}
Each term in this sum represents
the incremental information brought by the $i$-th observation, conditioned on all previous
measurements, and admits the expression
\begin{equation}
  I\bigl(\theta; y_i \,\big|\, y_{1:i-1},w,u\bigr)
  =
  \E_{\theta, y_{1:i}}\biggl[
    \log \frac{p\bigl(y_i \mid \theta, y_{1:i-1},w,u\bigr)}
              {p\bigl(y_i \mid y_{1:i-1},w,u\bigr)}
  \biggr],
  \label{eq:MI-increment-conditional}
\end{equation}
where the expectation is taken with respect to the joint distribution
$p_0(\theta)\,p(y_{1:i} \mid \theta,w,u)$.

If no sensor is activated at time $t_i$, the observation $y_i$ is understood as a degenerate random variable carrying no information, the corresponding mutual information contribution is zero.

Equations~\eqref{eq:MI-chain-rule} and \eqref{eq:MI-increment-conditional} provide an exact
discrete-time representation of the EIG. However, except for very restrictive settings,
this expression is intractable, as evaluating it for a given design would require simulating full observation sequences and repeatedly updating the posterior distribution, operations that are computationally prohibitive in controlled dynamical systems.

Using conditional independence of the observations, \eqref{eq:MI-increment-conditional}
can be rewritten as
\begin{equation}
  I\!\left(\theta; y_i \,\middle|\, y_{1:i-1},w,u\right)
  =
  \E_{\theta, y_{1:i}}\!\left[
    \log
    \frac{
      p\!\left(y_i \mid \theta,w,u\right)
    }{
      \int p\!\left(y_i \mid \theta',w,u\right)\,
           p(\theta' \mid y_{1:i-1},w,u)\,\mathrm{d}\theta'
    }
  \right].
  \label{eq:MI-increment-Gaussian-rewrite}
\end{equation}
This formulation makes explicit that the only dependence on past observations enters through
the posterior distribution $p(\theta \mid y_{1:i-1},w,u)$. Consequently, any attempt to
construct a time-additive and adjoint-compatible approximation of the expected information
gain must necessarily rely on suitable approximations of this posterior, which is the focus
of the next subsection.

\subsection{Posterior approximations and adjoint-compatible surrogates}

We introduce two adjoint-compatible surrogate objectives obtained by approximating the posterior distribution in a way that removes the dependence on the realized observation history. Depending on the approximation, the posterior is either fixed a priori or governed by a system of differential equations independent of the observed data. This leads to time-additive criteria that admit a natural continuous-time formulation and are therefore suitable for adjoint-based optimization.

\paragraph{Instantaneous surrogate}
\label{subsec:approx_instant_gain}

A tractable and adjoint-compatible objective can be obtained by adopting a myopic approximation of the posterior distribution, which yields a time-additive structure that can be evaluated without sequential posterior updates. Myopic or one-step-ahead strategies are classical in Bayesian optimal experimental design, where each observation is selected based on its immediate information gain \cite{Cavagnaro2010,Drovandi2014}. In static sensor placement problems, this approach is often supported by submodularity, which justifies greedy sequential selection with theoretical guarantees \cite{Krause2008}. In the present controlled dynamical setting, however, the control input shapes the system trajectory and thereby affects the informativeness of future measurements. As a result, marginal information gains depend on the overall control policy, and the design cannot be constructed sequentially in a consistent greedy fashion. We therefore adopt a global approximation in which the posterior is assumed independent of past observations, and replace the conditional increments in the chain-rule decomposition by unconditional, or \emph{instantaneous}, mutual information:
\begin{equation} \label{eq:I_inst_def}
  I\!\left(\theta; y_i \,\middle|\, y_{1:i-1}, w, u\right)
  \;\approx\;
  I_{\mathrm{inst}}\!\left(\theta; y_i \,\middle|\, w, u\right) :=
  \E\!\left[
    \log p\!\left(y_i \mid \theta, w, u\right)
    - \log p\!\left(y_i \mid w, u\right)
  \right],
\end{equation}
where the expectation is taken with respect to the joint distribution
\( p_0(\theta)\, p(y_i \mid \theta, w, u) \). Here, ``instantaneous'' refers to a single observation stage in
the chain-rule decomposition: each observation is evaluated as if it were the
first one.

Under this approximation, the chain-rule decomposition reduces to a sum of unconditional information gains, yielding the additive objective

\[
  J_{\mathrm{inst}}(w, u)
  =
  \sum_{i=1}^M
  I_{\mathrm{inst}}\!\left(\theta; y_i \mid w, u\right).
\]
This approximation amounts to replacing integration with respect to the posterior
by integration with respect to the prior. While this assumption discards the
progressive concentration of the posterior as information accumulates, it yields a
simple and computationally efficient surrogate objective that preserves the
structural advantages of Fisher-based design criteria. The quality of this
approximation and its relationship to the exact expected information gain are
analyzed in Subsection~\ref{subsection:theory_instant}.

\paragraph{Gaussian tilting surrogate}

To retain adjoint compatibility while accounting for posterior contraction and mitigating performance degradation and clustering effects, we introduce a refined surrogate based on a Gaussian tilting approximation. The construction is motivated by the linear--Gaussian setting, in which successive observations update the posterior through additive contributions to the precision matrix. Rather than explicitly propagating a data-dependent posterior distribution, we approximate the cumulative effect of past observations by reweighting the prior with a design-driven quadratic information factor.

Let \(p_0(\theta)\) denote the prior distribution of \(\theta\). 
For each potential observation time \(t_i\), we define the instantaneous Fisher information contribution
\[
F_i^{\Delta} := \sum_{d=1}^{n_{\mathrm{exp}}} 
w_{i,d}\, H_{i,d}^\top R_{i,d}^{-1} H_{i,d},
\]
where \(H_{i,d}\) denotes the Jacobian of the \(d\)-th observation map with respect to \(\theta\), evaluated at a fixed reference point \(\theta_\mathrm{ref} \in \mathbb{R}^{n_\theta}\)
\[
H_{i,d}
=
\left.
\frac{\partial}{\partial \theta}  h_d(x(t_i;\theta,u))
\right|_{\theta=\theta_\mathrm{ref}}
=
\frac{\partial h_d}{\partial x}\bigl(x(t_i;\theta_\mathrm{ref},u)\bigr)\,
G(t_i),
\]
and 
\[
G(t_i) = \left.\frac{\partial}{\partial \theta} x(t_i;\theta,u)\right|_{\theta=\theta_\mathrm{ref}}
\]
is the state sensitivity matrix introduced in \eqref{eq:fim_optimal_control}. Several choices are possible for the reference point, such as the prior mean or mode. Unless otherwise specified, \(\theta_\mathrm{ref}\) is taken to be the prior mean throughout the paper.

Given a design \((w,u)\), we introduce the accumulated Fisher information matrix, 
\begin{equation} \label{eq:accumulator_definition}
F_0 = 0,
\qquad
F_{i+1} = F_{i} + F_{i+1}^\Delta,
\end{equation}
representing the Fisher information accumulated up to stage $i$.

We define the Gaussian information factor
\[
\phi_i(\theta)
\;=
\exp\!\left(
-\tfrac12(\theta-\theta_\mathrm{ref})^\top F_i(\theta-\theta_\mathrm{ref})
\right).
\]
This factor mimics the cumulative contraction of the posterior induced by past observations. 
In particular, in the linear--Gaussian setting successive observations
lead to additive updates of the precision matrix. If the prior is Gaussian \(p_0 = \mathcal{N}(\theta_\mathrm{ref},\Sigma_0)\), tilting by \(\phi_i\) yields a Gaussian distribution with covariance \(\Sigma_i\) satisfying
\(
\Sigma_i^{-1} = \Sigma_0^{-1} + F_i .
\)

We then define the surrogate posterior at stage \(i\) by tilting and renormalizing the prior,
\begin{equation}
q_i(\theta)
:=
\frac{
  \phi_i(\theta)\,p_0(\theta)
}{
  \displaystyle
  \int \phi_i(\theta')\,p_0(\theta')\,\mathrm d\theta'
}. \label{eq:tilt_mechanism}
\end{equation}
By construction, \(q_i\) depends on the design variables only through the accumulated information matrix \(F_i\), which evolves additively along the experiment. This structure can be embedded as an auxiliary state variable, thereby preserving adjoint compatibility.

Using this surrogate within the chain-rule decomposition, we define the Gaussian tilting surrogate of the expected information gain as
\begin{equation}
J_{\mathrm{tilt}}(w,u)
=
\sum_{i=1}^M
\mathbb{E}\left[
  \log
  \frac{
    p\!\left(y_i \mid \theta, w, u\right)
  }{
    \displaystyle
    \int
    p\!\left(y_i \mid \theta', w, u\right)\,
    q_{i-1}(\theta')\,
    \mathrm d\theta'
  }
\right],
\end{equation}
where the expectation is taken with respect to the joint distribution
$q_{i-1}(\theta)\,p(y_i \mid \theta,w,u)$.

To ensure coherence of the proposed surrogate, consistency in the linear--Gaussian case is established in Subsection~\ref{subsec:tilt-theory}.
\begin{remark}[Distinction from the Laplace approximation]
The proposed construction should not be confused with a Laplace approximation of the posterior, often used in EIG computations \cite{Long2013,Lewi2009}. A Laplace approach builds a Gaussian approximation centered at a data-dependent mode, leading to updates of both the mean and the covariance. In contrast, the present tilting strategy introduces a deterministic quadratic reweighting of the prior that mimics a precision update while leaving the mean unchanged. This choice is motivated by the linear-Gaussian case, where the expected information gain depends only on the contraction of the posterior covariance and is independent of the posterior mean. It is this distinction that allows adjoint compatibility.
\end{remark}

%% file: sections/theoretical_results.tex
This section analyzes the approximation properties of the proposed surrogates of the expected information gain.
For the instantaneous surrogate, we establish a precise error identity showing that it systematically overestimates the true expected information gain, and we derive explicit upper and lower bounds that quantify the role of temporal redundancy under budget constraints.
For the Gaussian tilting surrogate, we prove exact consistency in the linear–Gaussian setting and establish stability with respect to prior approximations converging in the 2–Wasserstein sense.
Together, these results clarify the regimes in which each surrogate provides a reliable approximation of the expected information gain and supply a theoretical justification for their use in controlled experimental design.

\subsection{Redundancy and bounds for the instantaneous surrogate} \label{subsection:theory_instant}

We begin by characterizing the error induced by the instantaneous surrogate
and show that it systematically overestimates the expected information gain.

\begin{lemma}[Redundancy of the instantaneous surrogate]
For each $i \in \{1,\dots,M\}$, the error induced by the instantaneous surrogate satisfies \label{lmm:redundancy}
\begin{equation}
  I_{\mathrm{inst}}\!\left(\theta; y_i \,\middle|\, w,u\right)
  -
  I\!\left(\theta; y_i \,\middle|\, y_{1:i-1},w,u\right)
  =
  I\!\left(y_i; y_{1:i-1} \,\middle|\, w,u\right)
  \;\ge\; 0.
\end{equation}
As a consequence, the instantaneous surrogate provides an upper bound on each
incremental information gain, and therefore on the total expected information gain.
\end{lemma}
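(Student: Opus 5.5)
The plan is to expand the joint mutual information $I(\theta, y_{1:i-1}; y_i \mid w,u)$ in two different ways using the chain rule for mutual information \cite{coverthomas1991}. Throughout, the design $(w,u)$ is fixed and deterministic, so conditioning on it is purely parametric and will be left implicit.

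\textbf{First expansion.} Split off $\theta$ first:
\[
I(\theta, y_{1:i-1}; y_i) = I(\theta; y_i) + I(y_{1:i-1}; y_i \mid \theta).
\]
By the observation model \eqref{eq:likelihood_def}, the noises $\varepsilon_{i,d}$ are independent across times given $\theta$. The cemetery convention does not change this, since inactive components are deterministic given the design. Hence $y_i$ and $y_{1:i-1}$ are conditionally independent given $\theta$, so the second term vanishes. The first term is exactly $I_{\mathrm{inst}}(\theta; y_i \mid w,u)$ as defined in \eqref{eq:I_inst_def}, because its expectation is taken under $p_0(\theta)\,p(y_i\mid\theta,w,u)$, which is the marginal joint law of $(\theta,y_i)$.

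\textbf{Second expansion.} Split off $y_{1:i-1}$ first:
\[
I(\theta, y_{1:i-1}; y_i) = I(y_{1:i-1}; y_i) + I(\theta; y_i \mid y_{1:i-1}).
\]
Equating the two expansions gives
\[
I_{\mathrm{inst}} - I(\theta; y_i \mid y_{1:i-1}) = I(y_i; y_{1:i-1}),
\]
which is the claimed identity. Nonnegativity follows because it is a mutual information, i.e.\ a Kullback--Leibler divergence between the joint law and the product of marginals, which is nonnegative by Gibbs' inequality.

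\textbf{Consequence for the total.} Summing over $i$ and using the chain rule \eqref{eq:MI-chain-rule} yields $J_{\mathrm{inst}} \ge J_{\mathrm{EIG}}$, with the gap equal to $\sum_i I(y_i; y_{1:i-1})$.

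\textbf{Main obstacle: well-definedness.} The delicate step is justifying the chain rule when the observations live in the mixed space $(\mathbb{R}^{n_y}\cup\{c\})^{n_{\mathrm{exp}}}$, where densities are taken with respect to a product of Lebesgue and counting measures.

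I would argue at the level of log-likelihood ratios. Write the integrand
\[
\log\frac{p(y_i\mid\theta)}{p(y_i)}
\]
as the sum
\[
\log\frac{p(y_i\mid\theta,y_{1:i-1})}{p(y_i\mid y_{1:i-1})} + \log\frac{p(y_i\mid y_{1:i-1})}{p(y_i)},
\]
using $p(y_i\mid\theta,y_{1:i-1}) = p(y_i\mid\theta)$, which is the same identity already used in \eqref{eq:MI-increment-Gaussian-rewrite}. Then take the expectation under the full joint law $p_0(\theta)\,p(y_{1:i}\mid\theta)$. The standing integrability assumption guarantees all three terms are finite, so splitting the expectation is legitimate.

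For degenerate times with $w_i=0$, $y_i$ is deterministic and all three quantities are zero, so the identity holds trivially.
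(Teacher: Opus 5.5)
Your proposal is correct and matches the paper's proof: both expand $I(y_i;\theta,y_{1:i-1}\mid w,u)$ by the chain rule in the two orders, use conditional independence across time given $\theta$ to kill $I(y_i;y_{1:i-1}\mid\theta,w,u)$, and identify $I_{\mathrm{inst}}$ with $I(y_i;\theta\mid w,u)$. Your extra pointwise log-likelihood-ratio decomposition, which justifies the chain rule on the mixed observation space, and your treatment of the degenerate case $w_i=0$ are both sound and go slightly beyond what the paper writes out.
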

\begin{proof}
Expanding $I\!\left(y_i;\theta, y_{1:i-1}\mid w,u\right)$ with the chain rule in two different orders yields 
\begin{align}
I\!\left(y_i;\theta, y_{1:i-1}\mid w,u\right)
&= I\!\left(y_i;\theta\mid w,u\right) + I\!\left(y_i; y_{1:i-1}\mid \theta,w,u\right), \label{eq:order1}\\
I\!\left(y_i;\theta, y_{1:i-1}\mid w,u\right)
&= I\!\left(y_i; y_{1:i-1}\mid w,u\right) + I\!\left(y_i;\theta\mid y_{1:i-1},w,u\right). \label{eq:order2}
\end{align}
Subtracting \eqref{eq:order2} from \eqref{eq:order1} and rearranging gives
\[
I\!\left(y_i;\theta\mid w,u\right) - I\!\left(y_i;\theta\mid y_{1:i-1},w,u\right)
=
I\!\left(y_i; y_{1:i-1}\mid w,u\right) - I\!\left(y_i; y_{1:i-1}\mid \theta,w,u\right).
\]
Under the conditional independence across time of the observations given $\theta$,
the second term vanishes:
\(
I(y_i; y_{1:i-1} \mid \theta,w,u)=0.
\)
Since $I_{\mathrm{inst}}\!\left(\theta; y_i \,\middle|\, w,u\right) = I\!\left(y_i;\theta\mid w,u\right)$, the result follows.
\end{proof}
Since the observation noises are independent, the only source of correlation
between \(y_i\) and \(y_{1:i-1}\) is their shared dependence on the unknown parameter
\(\theta\).
Lemma \ref{lmm:redundancy} therefore shows that neglecting posterior updates amounts to treating successive
measurements as conditionally independent, which leads to systematic double
counting of information.
The resulting approximation error is thus large for highly redundant measurements
and remains small when observations are weakly correlated, for instance when
they are well separated in time or probe complementary system sensitivities.

We now show that, despite its myopic nature, the instantaneous surrogate
also provides a meaningful lower bound on the expected information gain under
natural design constraints. 
\begin{proposition}[Instantaneous surrogate bounds]
Consider an admissible design $(w,u)$ for which at least one measurement is taken, and define
\[
K_{\mathrm{time}}(w)
:=
\operatorname{card}
\Big\{
i\in\{1,\dots,M\} :
\sum_{d=1}^{n_{\mathrm{exp}}} w_{i,d} > 0
\Big\}.
\]
Then
\[
\frac{1}{K_{\mathrm{time}}(w)}\, J_{\mathrm{inst}}(w,u)
\;\le\;
J_{\mathrm{EIG}}(w,u)
\;\le\;
J_{\mathrm{inst}}(w,u).
\]
Moreover, the budget constraint $\sum\limits_{i=1}^{M} \sum\limits_{d=1}^{n_{\mathrm{exp}}} w_{i,d} \le K$ implies
$K_{\mathrm{time}}(w)\le K$, and therefore
\[
\frac{1}{K}\, J_{\mathrm{inst}}(w,u)
\;\le\;
J_{\mathrm{EIG}}(w,u)
\;\le\;
J_{\mathrm{inst}}(w,u).
\]
\end{proposition}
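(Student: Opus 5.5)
The upper bound $J_{\mathrm{EIG}}(w,u)\le J_{\mathrm{inst}}(w,u)$ follows directly from Lemma~\ref{lmm:redundancy}. Summing the identity of that lemma over $i=1,\dots,M$ and using the chain-rule decomposition \eqref{eq:MI-chain-rule} gives
\[
J_{\mathrm{inst}}(w,u)-J_{\mathrm{EIG}}(w,u)=\sum_{i=1}^M I\bigl(y_i;y_{1:i-1}\mid w,u\bigr)\ge 0 .
\]

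For the lower bound, I first restrict to the set of active times $\mathcal{A}:=\{i:\sum_d w_{i,d}>0\}$, whose cardinality is $K_{\mathrm{time}}(w)\ge 1$ by assumption. If $i\notin\mathcal{A}$, then every sensor at $t_i$ returns the deterministic value $c$. In that case $y_i$ is degenerate, so $I_{\mathrm{inst}}(\theta;y_i\mid w,u)=0$, and the corresponding chain-rule increment also vanishes. Hence
\[
J_{\mathrm{inst}}(w,u)=\sum_{i\in\mathcal{A}} I(\theta;y_i\mid w,u).
\]
Next I use monotonicity of mutual information. Since $y_i$ is a sub-vector (a deterministic function) of the full data $y$, the data-processing inequality, or equivalently the chain rule plus nonnegativity, gives $I(\theta;y_i\mid w,u)\le I(\theta;y\mid w,u)=J_{\mathrm{EIG}}(w,u)$ for each $i$. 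Summing over the $K_{\mathrm{time}}(w)$ active indices yields $J_{\mathrm{inst}}(w,u)\le K_{\mathrm{time}}(w)\,J_{\mathrm{EIG}}(w,u)$, which is the claimed lower bound after dividing by $K_{\mathrm{time}}(w)>0$.

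For the budget statement, I treat the weights as binary $w_{i,d}\in\{0,1\}$, as in the original discrete design. Then each active time contributes at least $1$ to $\sum_{i,d}w_{i,d}$, so $K_{\mathrm{time}}(w)\le\sum_{i,d}w_{i,d}\le K$. Since $J_{\mathrm{inst}}\ge 0$, we have $J_{\mathrm{inst}}/K\le J_{\mathrm{inst}}/K_{\mathrm{time}}$, which gives the second chain of inequalities.

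The only delicate point is this last step under the relaxed weights $w_{i,d}\in[0,1]$. There an active time could carry arbitrarily small weight, and $K_{\mathrm{time}}\le K$ would fail. I would therefore state explicitly that the budget implication is for binary (admissible) designs, or more generally for weights that are either zero or at least one at each time. A secondary check is that the degenerate-observation convention, with the cemetery value $c$ and the factor $\pi_\eta$, really makes the inactive terms exactly zero. This holds because $p(y_i\mid\theta,w,u)$ then does not depend on $\theta$.
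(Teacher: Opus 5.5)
Your proof is correct and follows essentially the same route as the paper: the upper bound comes from Lemma~\ref{lmm:redundancy} summed against the chain rule, and the lower bound comes from the monotonicity $I(\theta;y_i\mid w,u)\le I(\theta;y\mid w,u)$ summed over the $K_{\mathrm{time}}(w)$ active times, which the paper encodes with indicators $v_i$. Two steps the paper leaves implicit are that inactive times contribute exactly zero, and that $K_{\mathrm{time}}(w)\le K$ relies on the binary weights of the discrete design; you check both explicitly and correctly.
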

\begin{proof}
The upper bound follows directly from Lemma~\ref{lmm:redundancy}.

For the lower bound, let
\(
v_i := \mathbf{1}\Big\{\sum_{d=1}^{n_{\mathrm{exp}}} w_{i,d} > 0\Big\}
\) for $i\in \{1,\dots,M\}$. 
$K_{\mathrm{time}}$ is then equal to $\sum\limits_{i=1}^M v_i$.
By monotonicity of mutual information,
\[
I(\theta; y_{1:M}\mid w,u)\ge I(\theta; y_i\mid w,u)
\qquad \forall i.
\]
Multiplying by $v_i$ and summing yields
\[
K_{\mathrm{time}}(w)\, I(\theta; y_{1:M}\mid w,u)
\;\ge\;
\sum_{i=1}^M v_i\, I(\theta; y_i\mid w,u)
= J_{\mathrm{inst}}(w,u).
\]
Dividing by $K_{\mathrm{time}}(w)>0$ gives the result.
\end{proof}
These bounds clarify the regimes in which the instantaneous surrogate is
informative. Under a strict budget limiting the total number of observations,
$J_{\mathrm{inst}}$ provides a meaningful surrogate for the expected information
gain. Conversely, when the measurement budget is large and the selected measurements
are highly redundant, the instantaneous surrogate may become loose, in which
case $J_{\mathrm{inst}}$ primarily acts as an upper bound due to the neglect of
posterior updates.

\subsection{Consistency of the Gaussian tilting surrogate} \label{subsec:tilt-theory}

As a consistency check, we consider the classical linear--Gaussian setting.
In this case, we show that the proposed Gaussian tilting surrogate coincides
with the exact expected information gain when the prior is Gaussian.
We further establish that the surrogate is stable under prior
approximations converging in the $2$--Wasserstein ($W_2$) sense,
thereby ensuring robustness with respect to particle or quadrature-based
representations of the prior.

\begin{proposition}[Consistency in the Linear--Gaussian setting]
\label{prop:consistency_LG}
Consider a fixed design $(w, u)$ providing, for some observation matrix $H_{i, d}$ and  affine offset $b_{i,d}$, the linear--Gaussian observation models
\[
y_{i, d} = H_{i, d} \theta + b_{i, d} + \varepsilon_{i, d},
\qquad \varepsilon_{i, d} \sim \mathcal N(0,R_{i, d}),
\quad i=1,\dots,M, ~~\text{and}~~ d=1,\dots,n_{exp}
\]
with positive definite $R_{i, d}$. Let $J_{\mathrm{EIG}}(p_0)$ denote the exact expected information gain for this model under the parameter prior $p_0$, and let $J_{\mathrm{tilt}}(p_0)$ denote the Gaussian tilting objective constructed
from $p_0$.
\begin{enumerate}
\item
If the prior is Gaussian, $p_0=\mathcal N(m_0,\Sigma_0)$, then the Gaussian tilting
surrogate with $\theta_\mathrm{ref}=m_0$ is exact, and
\[
J_{\mathrm{tilt}}(p_0) = J_{\mathrm{EIG}}(p_0) = \tfrac{1}{2}\log\det(I + \Sigma_0 F_M),
\]
where \(F_M\) is the final accumulated Fisher information matrix defined
by~\eqref{eq:accumulator_definition}.
\item
Let $(p_k)_{k\ge1}$ be a sequence of priors with finite second moment such that
$p_k \to p_0:=\mathcal N(m_0,\Sigma_0)$ in $W_2$. Then, with \(\theta_{\mathrm{ref}}^{(k)}\) set to the mean of \(p_k\), we have,
for each fixed design, the pointwise convergence
\[
J_{\mathrm{tilt}}(p_k)
\xrightarrow[k\to\infty]{}
J_{\mathrm{EIG}}(p_0).
\]
\end{enumerate}
\end{proposition}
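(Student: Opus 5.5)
For part~1 we compute both sides in closed form and compare them stage by stage through the chain-rule decomposition \eqref{eq:MI-chain-rule}.

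In the linear--Gaussian model the Jacobian of the $d$-th observation map is the constant matrix $H_{i,d}$, independent of $\theta_{\mathrm{ref}}$. The increment $F_i^\Delta=\sum_d w_{i,d}H_{i,d}^\top R_{i,d}^{-1}H_{i,d}$ is therefore exactly the Fisher information of $y_i$.

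\textbf{Tilted posterior.} With $p_0=\mathcal N(m_0,\Sigma_0)$ and $\theta_{\mathrm{ref}}=m_0$, completing the square in \eqref{eq:tilt_mechanism} gives $q_{i-1}=\mathcal N(m_0,\Sigma_{i-1})$ with $\Sigma_{i-1}^{-1}=\Sigma_0^{-1}+F_{i-1}$.

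\textbf{Stage-$i$ term of $J_{\mathrm{tilt}}$.} This term is the mutual information between $\theta\sim q_{i-1}$ and $y_i$ under the linear Gaussian channel. The inactive sensors contribute the factor $\pi_\eta(w_i)$. For binary weights this factor is deterministic, so only the active block $\tilde H_i$, $\tilde R_i$ matters, and the standard Gaussian formula gives
\[
\tfrac12\log\det\bigl(I+\Sigma_{i-1}F_i^\Delta\bigr)=\tfrac12\log\det\bigl(\Sigma_{i-1}(\Sigma_{i-1}^{-1}+F_i^\Delta)\bigr)=\tfrac12\log\det\Sigma_{i-1}-\tfrac12\log\det\Sigma_i .
\]
Summing over $i$ telescopes to $\tfrac12\log\det(\Sigma_0\Sigma_M^{-1})=\tfrac12\log\det(I+\Sigma_0F_M)$.

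\textbf{True increments.} For the exact EIG, the true posterior after $y_{1:i-1}$ is Gaussian with covariance $\Sigma_{i-1}$ for every realisation of the data; only the mean depends on the data. In \eqref{eq:MI-increment-Gaussian-rewrite} the conditional mutual information given $y_{1:i-1}$ is then the same Gaussian channel quantity, which does not depend on the posterior mean. Hence each true increment equals the corresponding tilted increment, which proves part~1.

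\textbf{Part 2.} We exploit that for any prior the tilted surrogate in this model is a continuous functional of finitely many tilted laws. Write $m_k$ for the mean of $p_k$ and $\phi^{(k)}_i$ for the tilting factor centred at $\theta^{(k)}_{\mathrm{ref}}=m_k$. Because the Jacobians are constant, $F_i$ does not depend on $k$. $W_2$ convergence gives $m_k\to m_0$ and convergence of second moments. Since $\phi^{(k)}_i$ is bounded by $1$, continuous in $\theta$, and converges locally uniformly to $\phi_i$, weak convergence implies $\int\phi^{(k)}_i\,dp_k\to\int\phi_i\,dp_0>0$. It follows that $q^{(k)}_{i-1}\to q_{i-1}$ weakly, and in fact in $W_2$, since the tilted second moments are dominated by the untilted ones, which converge.

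The stage-$i$ term is $I(\theta;y_i)$ under $\theta\sim q^{(k)}_{i-1}$, i.e.\ $h(y_i)-h(y_i\mid\theta)$ for the model $y_i=\tilde H_i\theta+b+\varepsilon$. The conditional entropy $h(y_i\mid\theta)=\tfrac12\log\det(2\pi e\tilde R_i)$ is constant. The marginal density of $y_i$ is the Gaussian convolution $\int\mathcal N(y;\tilde H_i\theta+b,\tilde R_i)\,q(d\theta)$.

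\textbf{Main obstacle.} The hard step is continuity of $h(y_i)$, the differential entropy of a Gaussian convolution, under $W_2$ convergence of the mixing law $q$. I would argue it in three steps:
\begin{itemize}
\item The convolved densities are uniformly bounded above by $(2\pi)^{-n/2}\det\tilde R_i^{-1/2}$.
\item They converge pointwise, because the Gaussian kernel is bounded and continuous.
\item They admit a uniform Gaussian-type lower bound on compacts, so that $|\log p|\le C(1+|y|^2)$ uniformly in $k$. This lower bound follows from Jensen's inequality applied to $\log$ of the mixture, together with uniformly bounded second moments.
\end{itemize}
Combined with the convergence of second moments of $y_i$, a uniform integrability argument then gives $-\int p^{(k)}\log p^{(k)}\to-\int p\log p$.

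Summing the finitely many stages, we obtain $J_{\mathrm{tilt}}(p_k)\to J_{\mathrm{tilt}}(p_0)$, and part~1 identifies this limit with $J_{\mathrm{EIG}}(p_0)$.
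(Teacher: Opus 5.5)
Your proof is correct. Part~1 follows the paper's argument: the tilted law is $\mathcal N(m_0,\Sigma_{i-1})$, and the Gaussian increment depends only on the covariance. You also add two things the paper leaves implicit:
\begin{itemize}
\item the telescoping $\sum_i\tfrac12\log\det(\Sigma_{i-1}\Sigma_i^{-1})=\tfrac12\log\det(I+\Sigma_0F_M)$, which is what actually yields the closed form in the statement;
\item the observation that the stage-wise formula $\tfrac12\log\det(I+\Sigma_{i-1}F_i^\Delta)$ requires binary weights.
\end{itemize}

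Part~2 takes a genuinely different route.

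\textbf{The paper's route.} It keeps the $\theta$-wise structure of the surrogate and writes the stage term as $\mathbb{E}_{q_{i-1}^{(k)}}[-f_i+g_{i,k}]$. Here $g_{i,k}(\theta)=-\mathbb{E}_{Y\sim\ell_i(\cdot\mid\theta)}[\log m_{i,k}(Y)]$ is a conditional cross-entropy. Lemma~\ref{lmm:behavior_denom_eig_gauss} then shows that $g_{i,k}\to g_i$ locally uniformly, with a quadratic bound. This needs two extra steps: an equi-Lipschitz estimate to upgrade pointwise convergence of $m_{i,k}$ to locally uniform convergence in $y$, and a tail split under $\ell_i(\cdot\mid\theta)$ that is uniform over compact $\theta$-sets. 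Finally, the general tilted-expectation result, Lemma~\ref{lmm:weak_conv_qk}, is applied to this $k$-dependent integrand.

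\textbf{Your route.} You use translation invariance to reduce the stage term to $h(y_i)$ minus a constant. You then prove continuity of the differential entropy of the Gaussian mixture directly on the $y$-side, using:
\begin{itemize}
\item pointwise convergence of the mixture density and the uniform upper bound;
\item the same Jensen lower bound, giving $|\log p^{(k)}(y)|\le C(1+\|y\|^2)$;
\item dominated convergence on balls, and uniform integrability of $\|y_i\|^2$ on the tails.
\end{itemize}
This needs only pointwise convergence in $y$ and a single layer of limits, so it is shorter and more elementary. The paper's decomposition is more modular: Lemma~\ref{lmm:weak_conv_qk} handles any quadratically bounded $\theta$-dependent integrand and does not use that the conditional entropy is constant.

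\textbf{One step to state more carefully.} The convergence of the tilted second moments does not follow from mere domination by the untilted ones. What you actually use is that $\|\theta\|^2\phi_i^{(k)}\le\|\theta\|^2$ inherits uniform integrability under $(p_k)$ from $W_2$ convergence, together with $Z^{(k)}\to Z>0$. This is precisely what supplies the uniform integrability of $\|y_i\|^2$ in your tail estimate.
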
 
\begin{proof}
We prove the two assertions sequentially.

\noindent\textit{1. Exactness in the Gaussian case.}
Assume that the prior is Gaussian, $p_0=\mathcal N(m_0,\Sigma_0)$.
In the linear--Gaussian model, the posterior distribution after $i$
observations remains Gaussian, with precision matrix
\[
\Sigma_i^{-1}
=
\Sigma_0^{-1}
+
\sum_{j=1}^{i}
\sum_{d=1}^{n_{exp}}
w_{j,d}\, H_{j,d}^\top R_{j,d}^{-1} H_{j,d}
=
\Sigma_0^{-1}+F_i,
\]
where $F_i$ is the accumulated Fisher information defined in \eqref{eq:accumulator_definition}.
In particular, the posterior covariance $\Sigma_i$ is deterministic and
independent of the realized data.

The exact expected information gain admits the closed--form expression
\begin{equation}\label{eq:eig_gauss}
J_{\mathrm{EIG}}(p_0)
=
\sum_{i=1}^M \frac12 \log \det\!\left(I + \Sigma_{i-1} F_i^\Delta \right),
\end{equation}
where $\Sigma_{i-1}$ denotes the posterior covariance after $i-1$ steps.

Consider now the Gaussian tilting surrogate. Since $p_0$ is centered at $m_0$ and
$\theta_{\mathrm{ref}}=m_0$, tilting by
\(
\phi_i(\theta)
=
\exp\!\left(-\tfrac12(\theta-m_0)^\top F_i(\theta-m_0)\right)
\)
yields a Gaussian distribution $q_i$ with precision matrix $\Sigma_0^{-1}+F_i$ and hence
covariance $\Sigma_i$. Therefore, although the deterministic surrogate $q_i$ does not track the
data-dependent mean of the true posterior, it reproduces its exact covariance.
Since in the linear--Gaussian setting the incremental information gain depends
only on the prior covariance, the contribution computed from $q_{i-1}$ coincides
with the exact contribution given by \eqref{eq:eig_gauss}. Summing over
$i=1,\dots,M$ yields
\[
J_{\mathrm{tilt}}(p_0)=J_{\mathrm{EIG}}(p_0).
\]

\medskip

\noindent\textit{2. Convergence under $W_2$.}
Let $(p_k)_{k\ge1}$ be a sequence of probability measures with finite second moments such that
\(
p_k \to p_0 := \mathcal N(m_0,\Sigma_0)
\) in the $2$--Wasserstein distance.
Denoting by $m_k=\mathbb E_{p_k}[\theta]$ the mean of $p_k$, we define the tilting factors
\[
\phi_i^{(k)}(\theta)
=
\exp\!\left(-\tfrac12(\theta-m_k)^\top F_i (\theta-m_k)\right) \quad \text{and} \quad \phi_i(\theta)
=
\exp\!\left(-\tfrac12(\theta-m_0)^\top F_i (\theta-m_0)\right),
\]
as well as the probability measures
\[
q_i^{(k)}(d\theta)
\propto
\phi_i^{(k)}(\theta)p_k(d\theta)\quad \text{and} \quad 
q_i^{(0)}(d\theta)
\propto
\phi_i(\theta)p_0(d\theta).
\]
From Lemma~\ref{lmm:weak_conv_qk}, we obtain that for any sequence of continuous functions $(f_k)$ converging locally uniformly to $f$ and satisfying a uniform quadratic bound, we have
\begin{equation}
\mathbb{E}_{q_i^{(k)}}[f_k(\theta)]
\;\longrightarrow\;
\mathbb{E}_{q_i^{(0)}}[f(\theta)]. \label{eq:convergence_qk}
\end{equation}
For each stage \(i\), let \(Y_i\) denote the observation block at step \(i\),
with conditional density
\[
\ell_i(y\mid\theta):=p(Y_i=y\mid\theta,w,u).
\]
To conclude, it remains to verify that the functions
\[
f_i(\theta)
:=
-\mathbb{E}_{Y_i\sim \ell_i(\cdot\mid\theta)}
[\log \ell_i(Y_i\mid\theta)]
\]
and
\[
g_{i,k}(\theta)
:=
-\mathbb{E}_{Y_i\sim \ell_i(\cdot\mid\theta)}
[\log m_{i,k}(Y_i)] \quad \text{where} \quad m_{i,k}(y)
=
\int \ell_i(y\mid\theta')\,q_{i-1}^{(k)}(d\theta')
\]

satisfy a uniform quadratic bound in \(\theta\), and that \(g_{i,k}\) converges locally uniformly to 
\[
g_i(\theta)
:=
-\mathbb{E}_{Y_i\sim \ell_i(\cdot\mid\theta)}
[\log m_i(Y_i)]
\qquad \text{where} \quad
m_i(y)
=
\int \ell_i(y\mid\theta')\,q_{i-1}^{(0)}(d\theta').
\]
In the linear--Gaussian observation model the log-likelihood
$\log \ell_i(y\mid\theta)$ grows at most quadratically in $(\theta,y)$, which directly yields the uniform quadratic bound on $f_i$. The desired properties on \(g_{i,k}\) follow from Lemma~\ref{lmm:behavior_denom_eig_gauss}.

We can thus apply \eqref{eq:convergence_qk} to the first term with the fixed function \(f_i\), and to the second term with \(g_{i,k}\to g_i\), to conclude that the \(i\)-th stage contribution converges. Summing over \(i=1,\dots,M\), we obtain
\[
\lim_{k\to\infty} J_{\mathrm{tilt}}(p_k)
=
J_{\mathrm{tilt}}(p_0).
\]
Combining this with Part~1 yields
\[
\lim_{k\to\infty} J_{\mathrm{tilt}}(p_k)
=
J_{\mathrm{EIG}}(p_0).
\]
\end{proof}

%% file: sections/in_practice.tex
In this section, we reformulate the proposed surrogate objectives as optimal control problems that are computationally tractable within an adjoint-based framework.

In the previous sections, observations were collected at a finite set of
candidate times, and the EIG decomposition was written as a sum over these
observations. In order to obtain a formulation compatible with adjoint-based
optimal-control solvers, we now introduce a continuous-time optimal-control
relaxation of this discrete design problem. This is in the same spirit as
continuous-time optimal-control formulations of sampling decisions in classical
Fisher-based OED~\cite{Sager2013}.

The surrogate objectives involve expectations over the parameter
distribution. To obtain a closed and differentiable dynamical system,
we approximate the prior distribution $p_{0}$ by a finite Dirac mixture
\begin{equation} \label{eq:dirac_approx_prior}
    p_{0} \approx \sum_{k=1}^{N} m_{k} \delta_{\theta_{k}} .
\end{equation}
This approximation replaces expectations with respect to the prior and the tilted surrogate distributions by finite weighted sums, thereby reducing the problem to the propagation of a finite set of deterministic state trajectories.
\[
x_k(t) = x(t; u, \theta_k)
\]
that can be propagated simultaneously. 

As a result, the optimization problem can be cast as a standard nonlinear optimal control problem with smooth dynamics, enabling efficient adjoint-based gradient computation.

In practice, such weighted-node approximations can be obtained via Monte Carlo sampling or by
deterministic integration rules such as Gauss--Hermite quadrature, sparse grids, or
quasi-Monte Carlo sequences \cite{Zhou2024,dick2013high,bungartz2004sparse}. Gauss--Hermite rules
are exact for polynomials up to degree \(2q-1\) in one dimension and can
converge rapidly for smooth low-dimensional integrands while quasi-Monte
Carlo methods can improve practical convergence for sufficiently regular
integrands with low effective dimension~\cite{dick2013high}. More generally,
the convergence properties of these integration rules depend on the
regularity of the corresponding integrands. In the numerical examples
considered here, the ODE solution and observation maps depend smoothly on
the parameters and, together with the Gaussian observation model, lead to
smooth integrands in the parameter and noise variables.

However, tensor-product rules suffer from the curse of dimensionality. For
example, a Gauss--Hermite rule with \(q=4\) nodes per dimension requires
\(N=16\) nodes for \(n_\theta=2\), but already \(N=1{,}024\) for
\(n_\theta=5\). The formulation is therefore intended for low
effective uncertainty dimension, rather than as a general high-dimensional
EIG estimator. The deterministic noise quadrature is similarly limited to low-dimensional
active observation blocks, but the time-additive structure of the surrogates
ensures that it is only needed over the noise at a single time step, not
over the full sequence.

This approximation provides a natural first optimization-compatible realization of the proposed criteria. Exploring more scalable prior representations while preserving this compatibility is a promising direction for future work, but falls outside the scope of the present paper.

Throughout this section, to simplify notation, we assume that the noise distribution of each sensor does not depend on time and omit the time index, writing $\varepsilon_d$ instead of $\varepsilon_{i,d}$.

\subsection{Probabilistic Relaxation of the Sampling Policy} \label{subsec:relaxation}

To enable the use of adjoint-based optimization algorithms, the original discrete design problem must first be relaxed, as is classically done in the OED literature~\cite{Huan_Jagalur_Marzouk_2024}. We first derive the relaxation in the discrete-time setting introduced in Section~\ref{sec:problem_formulation}. The binary sampling decisions are relaxed by allowing the weights to take values in the unit interval. The passage to continuous time will be introduced later when deriving the optimal control formulation.

We interpret $w_{d}(t_i)\in[0,1]$ as the instantaneous probability of activating sensor $d$ at time $t_i$. This induces a mixture over the sensor configurations $\eta\in\{0,1\}^{n_{\mathrm{exp}}}$, where we remind that $\eta_d=1$ indicates that sensor $d$ is active and $\eta_d=0$ that it is inactive.
Under the assumption that the activations are independent across sensors, the configuration weight \eqref{eq:configuration_weight} can then be interpreted as a probability of activating configuration $\eta$ :
\begin{equation}
\pi_\eta(w(t_i))
:=
\prod_{d=1}^{n_{\mathrm{exp}}}
w_d(t_i)^{\eta_d}
\bigl(1-w_d(t_i)\bigr)^{1-\eta_d}.
\end{equation}
With $\eta(y_i)$ defined in \eqref{eq:config_y}, the relaxed likelihood is then defined by
\begin{equation} \label{eq:likelihood_relax}
p\big(y(t_i) \mid \theta, w, u\big)
:=
\pi_{\eta(y(t_i))}(w(t_i))
\prod_{d:\eta_d(y(t_i))=1}
p_d\big(y_{d}(t_i) \mid \theta, u\big).
\end{equation}
This mixture likelihood representation then provides a continuous relaxation of the discrete design variables
that coincides with the original model of Section~\ref{sec:problem_formulation} whenever $w(t_i)\in\{0,1\}^{n_{\mathrm{exp}}}$. As shown in the following sections, this structure leads to tractable objective functions whose gradients can be computed efficiently using adjoint methods.

For large sensor arrays, the combinatorial growth of the configuration set 
$\{0,1\}^{n_{\mathrm{exp}}}$ may become computationally prohibitive. 
Alternative formulations based on continuous sensor-weighting or precision-scaling 
can avoid the explicit enumeration of sensor subsets and lead to scalable optimization 
problems~\cite{Haber2008,Alexanderian2014}. 
In the present work, however, we restrict attention to $n_{\mathrm{exp}}=2$ sensors. 
Excluding the null configuration, only three non-trivial configurations need to be 
evaluated, keeping the formulation computationally manageable.

\subsection{Entropy of the relaxed observation model}

Using the entropy decomposition of mutual information, the incremental information gain can be written as
\begin{equation}
I(\theta; y_i \mid y_{1:i-1}, w, u)
=
{\cal H}(y_i \mid y_{1:i-1}, w, u)
-
{\cal H}(y_i \mid \theta, w, u),
\end{equation}

where $\mathcal{H}(a)$ denotes the entropy of the random
variable~$a$ and $\mathcal{H}(a \mid b)$ denotes the conditional entropy
of~$a$ given~$b$.

The second term corresponds to the uncertainty induced by measurement noise. For a fixed sensor configuration
\(\bar\eta \in \{0,1\}^{n_{\mathrm{exp}}}\), only the active sensors contribute to the continuous observation uncertainty. Under the additive observation model \eqref{eq:obs-model}, with noise independent of the state and the parameter, the conditional distribution of each active measurement is a translation of the corresponding noise distribution. By translation invariance of differential entropy, this yields
\begin{equation}
{\cal H}(y_i \mid \eta=\bar\eta, \theta, w, u)
=
\sum_{d:\bar\eta_d = 1} {\cal H}(\varepsilon_d).
\end{equation}
In the relaxed formulation, the observation law is a mixture over sensor configurations with weights
\(\pi_{\bar\eta}(w(t_i))\). Since the active sensor configuration \(\eta\) can be recovered deterministically from the relaxed observation \(y_i\) through the cemetery value \(c\), one has
\(
{\cal H}(\eta \mid y_i,\theta,w,u)=0
\).
Hence, by the chain rule for conditional entropy,
\begin{equation}
{\cal H}(y_i \mid \theta,w,u)
=
{\cal H}(\eta \mid \theta,w,u)
+
{\cal H}(y_i \mid \eta,\theta,w,u).
\end{equation}
Expanding the second term with respect to the distribution of \(\eta\), we obtain
\begin{equation}
{\cal H}(y_i \mid \theta,w,u)
=
{\cal H}(\eta \mid \theta,w,u)
+
\sum_{\bar\eta\in\{0,1\}^{n_{\mathrm{exp}}}}
\pi_{\bar\eta}(w(t_i))\,{\cal H}(y_i \mid \eta=\bar\eta,\theta,w,u).
\end{equation}
Moreover, conditionally on \(w\), the random configuration \(\eta\) is independent of \(\theta\), so that
\begin{equation}
{\cal H}(\eta \mid \theta, w, u)
=
{\cal H}(\pi(w(t_i)))
:=
-
\sum_{\bar\eta \in \{0,1\}^{n_{\mathrm{exp}}}}
\pi_{\bar\eta}(w(t_i)) \log \pi_{\bar\eta}(w(t_i)).
\end{equation}
Combining the previous identities yields
\begin{equation} \label{eq:entropy_measure}
{\cal H}(y_i \mid \theta, w, u)
=
{\cal H}(\pi(w(t_i)))
+
\sum_{d=1}^{n_{\mathrm{exp}}} w_d(t_i)\, {\cal H}(\varepsilon_d).
\end{equation}
The conditional entropy of the relaxed observation therefore consists of two contributions: a term associated with the randomized sensor configuration, and a term induced by the sensor noise distributions. The latter depends only on the sensor noise distributions and on the sampling policy, and is independent of both the system dynamics and the prior approximation used in the surrogate objectives. Moreover, the configuration entropy appears symmetrically in the predictive entropy term and therefore cancels out in the mutual information. As a result, the remaining contribution admits a simple explicit expression whose extension to the continuous-time relaxation introduced below is immediate. In the following subsections, we therefore focus on deriving continuous-time optimal control formulations for the two surrogate objectives.

\subsection{Optimal Control Problem for the Instantaneous Surrogate}

To obtain a formulation compatible with adjoint-based optimal control methods, we introduce a continuous-time relaxation of the sampling policy.
Instead of optimizing the sensor activations at the discrete sampling times \(t_i\), we consider continuous activation functions
\[
w_d : [0,T] \to [0,1], \qquad d=1,\dots,n_{\mathrm{exp}}.
\]

On a grid with step size \(\Delta t\), the continuous notation is understood in
the Riemann-sum sense:
\[
\int_0^T \sum_d w_d(t)\,dt \approx \Delta t\sum_{i,d} w_{i,d}.
\]
Thus, \(K\) discrete measurements correspond to a continuous-time budget
\(\Delta t K\). Similarly, the continuous-time objective should be interpreted through the same
Riemann-sum convention when compared with its discrete counterpart.

As shown in the previous subsection, the discrete configuration entropy term \({\cal H}(\pi(w(t)))\) appears in both the conditional and predictive entropies, and therefore cancels out in the mutual information. The remaining predictive contribution can thus be evaluated configuration-wise.
For a fixed configuration \(\eta \in \{0,1\}^{n_{\mathrm{exp}}}\), let \(y_\eta(t)\) denote the continuous observation vector generated by the sensors active under configuration \(\eta\). We then define the corresponding conditional likelihood by
\begin{equation}
p_\eta\big(y_\eta(t)\mid\theta,u\big)
=
\prod_{d:\eta_d=1}
p_{\varepsilon_d}\!\left(y_d(t)-h_d\bigl(x(t;u,\theta)\bigr)\right),
\end{equation}
and the associated predictive density by
\(
p_\eta\big(y_\eta(t)\mid u\big)
=
\int_{\mathbb{R}^{n_\theta}}
p_\eta\big(y_\eta(t)\mid \theta',u\big)\,p_0(\theta')\,d\theta'.
\)

Using the definition of the instantaneous mutual information \eqref{eq:I_inst_def} with the likelihood relaxation \eqref{eq:likelihood_relax} and the entropy expression \eqref{eq:entropy_measure}, we define the continuous-time instantaneous information rate $I_{\mathrm{inst}}\!\left(\theta; y(t) \,\middle|\, w(t), u\right)$ as
\[
-
\sum_{\eta\in\{0,1\}^{n_{\mathrm{exp}}}}
\pi_\eta(w(t))
\,
\mathbb{E}_{\theta\sim p_0,\;y_\eta(t)\sim p_\eta(\cdot\mid\theta,u)}
\!\left[
\log p_\eta\big(y_\eta(t)\mid u\big)
\right]
-
\sum_{d=1}^{n_{\mathrm{exp}}} w_d(t)\, {\cal H}(\varepsilon_d).
\]
Since the instantaneous surrogate objective is additive over time, this yields the continuous-time objective
\begin{equation}
J_{\mathrm{inst}}(w,u)
=
\int_0^T
I_{\mathrm{inst}}\!\left(\theta; y(t)\,\middle|\, w(t),u\right)\,\mathrm{d}t.
\end{equation}
To evaluate the expectation over the observation noise, we use a quadrature rule (e.g., Gauss--Hermite) with nodes and weights $\{(s_q,\xi_q)\}_{q=1}^Q$, where $\xi_q\in\mathbb{R}^{n_{\mathrm{exp}}}$. 

The prior and noise expectations are discretized separately because the ODE
trajectories depend on~\(\theta\) but not on~\(\varepsilon\): for each current
design, the \(N\) parameter trajectories are propagated once and reused for all
\(Q\) noise nodes. This separation exploits the conditional structure of the
observation model and avoids coupling the expensive ODE propagation to the
noise quadrature.
Together with the Dirac approximation \eqref{eq:dirac_approx_prior} of the prior, this yields the following optimal problem

\begin{equation}
\begin{aligned}
\min_{u, w} \quad & -
\int_0^T \left[
\sum_{\substack{1\le k\le N\\ 1\le q\le Q\\ \eta\in\{0,1\}^{n_{\mathrm{exp}}}}}
m_k\, s_q\,
\pi_\eta\!\big(w(t)\big)
\,
\log\!\big(\mathcal L_{kq}^{\eta}(t)\big)
+ \sum_{d=1}^{n_{\mathrm{exp}}} w_{d}(t)\, {\cal H}(\varepsilon_d)\right]\,\mathrm{d}t , \\
\text{subject to} \quad
& \mathcal L_{kq}^\eta(t)
=
\sum_{\ell=1}^N
m_\ell
\prod\limits_{d:\eta_d = 1}
p_{\varepsilon_d}\big(
h_d(x_k(t))-
h_d(x_\ell(t))+
\xi_{q,d}
\big)\\
& \pi_\eta(w(t))
=
\prod_{d=1}^{n_{\mathrm{exp}}}
w_d(t)^{\eta_d}
\bigl(1-w_d(t)\bigr)^{1-\eta_d} \\
&\forall k \in \{1,\dots,N\}, \quad \dot{x}_k(t) = f\bigl(x_k(t), u(t), \theta_k, t\bigr), \\
& \dot{z}(t) = \sum_{d=1}^{n_{\mathrm{exp}}}w_d(t), \quad x_k(0) = x_0, \quad z(0) = 0, \\
& u(t) \in \mathcal{U}, \quad w_d(t) \in [0,1], \quad z(T) \le K.
\end{aligned} \label{eq:optimisation_prbl_inst}
\end{equation}

\begin{remark}[Bang--bang optimality of the sampling policy]
In the relaxed formulation, the sampling functions satisfy $w_d(t)\in[0,1]$.
For any fixed time $t$, the objective depends on $w(t)$ through multilinear
terms of the form $\prod_d w_d^{\eta_d}(1-w_d)^{1-\eta_d}$.
Since $\eta_d \in \{0,1\}$, such functions are affine in each component $w_d$ when the others are fixed.
Since the admissible set is a hypercube, extrema are attained at
its vertices \cite{LANEVE2010}. Consequently, whenever an optimal solution exists, there also
exists an optimal bang--bang sampling policy satisfying $w_d(t)\in\{0,1\}$.
\label{rem:bangbang_sol}
\end{remark}

\subsection{Optimal Control Problem for the Gaussian Tilting Surrogate}

We now introduce a continuous-time particle approximation of the Gaussian
tilting surrogate that is compatible with adjoint-based optimal control methods.

We consider the Fisher information matrix accumulated along a reference trajectory, denoted by $F(t)$. Following the Fisher-based optimal design problem
~\eqref{eq:fim_optimal_control}, we define
\[
\dot{F}(t)
=
\sum_{d=1}^{n_{\mathrm{exp}}}
w_d(t)\,
\left[\frac{\partial h_d}{\partial x}\bigl(x_\mathrm{ref}(t)\bigr) G(t)\right]^\top
R_d^{-1}
\left[\frac{\partial h_d}{\partial x}\bigl(x_\mathrm{ref}(t)\bigr) G(t)\right],
\]
where $F(0)=0$
and $G(t)$ denotes the state sensitivity with respect
to the parameters, satisfying
\[
\dot{G}(t)
=
\frac{\partial f}{\partial x}\bigl(x_\mathrm{ref}(t),u(t),\theta_\mathrm{ref},t\bigr)G(t)
+
\frac{\partial f}{\partial \theta}\bigl(x_\mathrm{ref}(t),u(t),\theta_\mathrm{ref},t\bigr),
\]
with $x_{\mathrm{ref}}(t):=x(t;u,\theta_{\mathrm{ref}})$ denoting the trajectory
associated with the fixed reference parameter $\theta_{\mathrm{ref}}$, which can
be taken to be the prior mean
\(
\theta_\mathrm{ref} := \sum_{k=1}^N m_k\,\theta_k .
\)
The associated Gaussian continuous weighting factor is then defined as
\[
\phi_t(\theta)
=
\exp\!\left(
-\tfrac12(\theta-\theta_\mathrm{ref})^\top F(t)(\theta-\theta_\mathrm{ref})
\right).
\]
Tilting and renormalizing the Dirac mixture prior yields the surrogate distribution
\begin{equation}
q_t^{(N)}(\theta)
=
\frac{\phi_t(\theta)\,p_0^{(N)}(\theta)}
{\int \phi_t(\theta')\,p_0^{(N)}(\theta')\,\mathrm d\theta'}
=
\sum_{k=1}^N \mu_k(t)\,\delta_{\theta_k}(\theta),
\label{eq:dirac-tilted-ct}
\end{equation}
with time-dependent weights
\(
\mu_k(t)
=
\frac{m_k\,\phi_t(\theta_k)}
{\sum_{j=1}\limits^N m_j\,\phi_t(\theta_j)}.
\)
Differentiating with respect to time, the weights satisfy the
replicator-type ordinary differential equation
\[
\dot \mu_k(t)
=
-\tfrac12\,\mu_k(t)
\left(
(\theta_k-\theta_\mathrm{ref})^\top \dot{F}(t)\,(\theta_k-\theta_\mathrm{ref})
-
\sum_{j=1}^N \mu_j(t)\,(\theta_j-\theta_\mathrm{ref})^\top \dot{F}(t)\,(\theta_j-\theta_\mathrm{ref})
\right),
\]
which preserves positivity and the simplex constraint
$\sum\limits_{k=1}^N \mu_k(t)=1$.

As in the instantaneous surrogate case, the discrete configuration entropy term \({\cal H}(\pi(w(t)))\) cancels out in the mutual information, and we approximate the expectation with respect to the measurement noise
using a quadrature rule. 
The adjoint-compatible optimal experimental design problem associated with the Gaussian tilting surrogate then reads
\begin{equation}
\begin{aligned}
\min_{u, w} \quad & -
\int_0^T \left[
\sum_{\substack{1\le k\le N\\ 1\le q\le Q\\ \eta\in\{0,1\}^{n_{\mathrm{exp}}}}}
\mu_k(t)\, s_q\,
\pi_\eta\!\big(w(t)\big)
\,
\log\!\big(\mathcal L_{kq}^{\eta}(t)\big)
+ \sum_{d=1}^{n_{\mathrm{exp}}} w_{d}(t)\, {\cal H}(\varepsilon_d)\right]\,\mathrm{d}t , \\
\text{s.t.} \quad
& \mathcal L_{kq}^\eta(t)
=
\sum_{\ell=1}^N
\mu_\ell(t)
\prod\limits_{d:\eta_d = 1}
p_{\varepsilon_d}\big(
h_d(x_k(t))-
h_d(x_\ell(t))+
\xi_{q,d}
\big)\\
& \pi_\eta(w(t))
=
\prod_{d=1}^{n_{\mathrm{exp}}}
w_d(t)^{\eta_d}
\bigl(1-w_d(t)\bigr)^{1-\eta_d} \\
&\forall k \in \{1,\dots,N\}, \quad \dot{x}_k(t) = f\bigl(x_k(t), u(t), \theta_k, t\bigr), \\
&\dot{x}_{\mathrm{ref}}(t) = f\bigl(x_{\mathrm{ref}}(t), u(t), \theta_{\mathrm{ref}}, t\bigr), \\
& \dot G(t)
= f_x\!\left(x_\mathrm{ref}(t),u(t),\theta_\mathrm{ref}, t\right) G(t)
+ f_\theta\!\left(x_\mathrm{ref}(t),u(t),\theta_\mathrm{ref}, t\right),\\
& \dot{F}(t) = \sum_{d=1}^{n_{\mathrm{exp}}}
w_d(t)\, \left[\frac{\partial h_d}{\partial x}\bigl(x_\mathrm{ref}(t)\bigr) G(t)\right]^\top
R_d^{-1}
\left[\frac{\partial h_d}{\partial x}\bigl(x_\mathrm{ref}(t)\bigr) G(t)\right], \\
& \forall k \in \{1,\dots,N\}, \quad \dot \mu_k(t)
= \\
&\quad \quad -\tfrac12\,\mu_k(t)
\left(
(\theta_k-\theta_\mathrm{ref})^\top \dot{F}(t)(\theta_k-\theta_\mathrm{ref})
-
\sum_{j=1}^N \mu_j(t)
(\theta_j-\theta_\mathrm{ref})^\top \dot{F}(t)(\theta_j-\theta_\mathrm{ref})
\right), \\
& \dot{z}(t) = \sum_{d=1}^{n_{\mathrm{exp}}}w_d(t), \quad
x_k(0)=x_0,\quad x_{\mathrm{ref}}(0)=x_0, \quad G(0)=0,\quad \mu_k(0)=m_k,\\ &z(0)=0, \quad
u(t) \in \mathcal{U}, \quad w_d(t) \in [0,1], \quad z(T) \le K.
\end{aligned} \label{eq:optimisation_prbl_tilt}
\end{equation}

Algorithm~\ref{algo:evaluation} summarizes the evaluation of the two surrogate
objectives. Gradients are obtained by reverse-mode automatic differentiation
of the same computational graph in \texttt{CasADi}.

\begin{algorithm}[htbp]
\caption{Evaluation of $J_{\mathrm{inst}}$ or $J_{\mathrm{tilt}}$}
\label{algo:evaluation}
\begin{algorithmic}[1]

\REQUIRE Prior nodes $\{(\theta_k,m_k)\}_{k=1}^N$, noise nodes
$\{(\varepsilon_q,\omega_q)\}_{q=1}^Q$, design $(u_i,w_i)_{i=1}^{N_t}$

\STATE Initialize particles $X_k=x_0$, objective $J=0$, and, for
$J_{\mathrm{tilt}}$, $X_{\rm ref}=x_0$, $G=0$, $F=0$

\FOR{$i=1,\ldots,N_t$}

\STATE Propagate all particles:
\[
X_k\leftarrow \Phi(X_k,\theta_k,u_i)
\]

\IF{$J_{\mathrm{tilt}}$}
\STATE Compute
\[
\mu_k \propto
m_k\exp\!\left(
-\frac12(\theta_k-\theta_{\rm ref})^\top
F(\theta_k-\theta_{\rm ref})
\right)
\]
\ELSE
\STATE Set $\mu_k=m_k$
\ENDIF

\STATE Normalize the weights $\mu_k$

\FOR{each non-empty sensor configuration $\eta$}

\STATE Approximate
\[
I_\eta
=
\sum_{q,k}\omega_q\mu_k
\left[
\ell_{kk}^{\eta,q}
-
\log\sum_{\ell=1}^N
\mu_\ell\exp(\ell_{k\ell}^{\eta,q})
\right],
\]
where, for Gaussian observation noise,
\[
\ell_{k\ell}^{\eta,q}
=
-\sum_{d:\eta_d=1}
\frac{
\bigl(h_d(X_k)-h_d(X_\ell)+\varepsilon_{q,d}\bigr)^2
}{
2\sigma_d^2
}.
\]

\ENDFOR

\STATE Accumulate
\[
J\leftarrow J+\sum_{\eta\neq0}\pi_\eta(w_i)I_\eta .
\]

\IF{$J_{\mathrm{tilt}}$}
\STATE Propagate the reference state/sensitivity system and update
the Fisher accumulator $F$ using $(u_i,w_i)$
\ENDIF

\ENDFOR

\RETURN $J$

\end{algorithmic}
\end{algorithm}

%% file: sections/numerical_results.tex
In this section, we assess the proposed design criteria on four numerical test cases built from two benchmark controlled dynamical systems.
The optimization problems associated with the instantaneous
surrogate~\eqref{eq:optimisation_prbl_inst}, the Gaussian tilting
surrogate~\eqref{eq:optimisation_prbl_tilt}, and the \(A\)-optimality
criterion~\eqref{eq:fim_optimal_control} were implemented in Python using
\texttt{CasADi}~\cite{Andersson2019} and solved with
\texttt{IPOPT}~\cite{Waechter2006}. We also include a local linear-Gaussian EIG baseline, defined by
\[
  J_{\mathrm{Linearised
}}(w,u)
  =
  \tfrac{1}{2} \log\det\!\left(I+\Sigma_0\, F(T)\right),
\]
where \(\Sigma_0\) is the prior covariance and \(F(T)\) is the accumulated
Fisher information along the reference trajectory. This criterion corresponds
to the EIG of the local linear--Gaussian approximation obtained by linearizing
the nonlinear model around the reference trajectory. It uses the same
augmented sensitivity/Fisher ODE system as the \(A\)-optimality criterion.
All reported runs terminated with IPOPT status ``Optimal Solution Found''.

We also tested a direct sample-average approximation of the full-sequence EIG (SAA-EIG).
In our implementation, the Bernoulli sensor relaxation led to unstable adjoint
evaluations, while a simpler power-likelihood relaxation converged but produced
less accurate designs in the examples considered. We therefore do not include
direct SAA-EIG optimization as a main baseline. This should however not be interpreted
as a general claim that direct EIG optimization is infeasible.

For each scenario, the resulting designs are evaluated by Monte Carlo simulation: 1000 independent parameter samples are drawn from the prior distribution and, for each sampled value, a maximum-likelihood estimate is computed under each design and compared with the true parameter value.

\subsection{Harmonic Oscillator}

We first consider a benchmark problem consisting of two second-order oscillators driven by a common control input over the time interval \([0,10]\). This example is intended to highlight two effects: first, the advantage of the proposed EIG-based surrogates over classical Fisher-based designs; second, the ability of the tilting surrogate to avoid over-concentrating the measurements on the most informative sensor.

The state variable is defined by the positions and velocities of the two oscillators,
\[
x(t)=\bigl(q_1(t),\,q_2(t),\,\dot q_1(t),\,\dot q_2(t)\bigr)^\top \in \mathbb{R}^4,
\]
and the dynamics are given by
\begin{equation}
\left\{
\begin{aligned}
\ddot q_1(t) + 0.5\,\dot q_1(t) + \theta_1 q_1(t) &= u(t),\\
\ddot q_2(t) + 0.5\,\dot q_2(t) + \theta_2 q_2(t) &= u(t),
\end{aligned}
\right.
\end{equation}
where \(u(t)\in[0,1]\) is a piecewise-constant control input, assumed to be constant on each element of a uniform partition of \([0,10]\) into \(12\) subintervals, and \(\theta = (\theta_1,\theta_2)^\top \in [5,10]^2\)
is the unknown parameter. The initial state is set to
\(x(0) = (1,\,1,\,0,\,0)^\top.\)

Two sensors are available, each measuring the position of one oscillator:
\begin{equation}
y_1(t) = q_1(t) + \varepsilon_{1}(t)
\quad\text{and}\quad
y_2(t) = q_2(t) + \varepsilon_{2}(t),
\end{equation}
where \(\varepsilon_{1}(t)\) and \(\varepsilon_{2}(t)\) are centered Gaussian random variables with standard deviations \(\sigma_1\) and \(\sigma_2\), respectively.

The prior distribution of \(\theta\) is uniform on \([5,10]^2\). The experimental design is restricted to \(8\) distinct observation times. The final design is obtained by discretizing the continuous optimal design \(w\), with the additional constraint that any two selected observation times must be separated by at least \(0.1\) time units.

The prior is discretized using a 32-point Sobol quasi-Monte Carlo rule, and
the scalar Gaussian measurement noise is discretized using a degree-4
Gauss--Hermite quadrature rule. The numbers of prior nodes \(N\) and noise quadrature nodes \(Q\) were
selected from a convergence study of the surrogate objectives evaluated
at the A-optimal designs. We increased \(N\) and \(Q\) until the surrogate
values reached a stable plateau, and used the corresponding discretizations
in the numerical experiments. Figure~\ref{fig:scaling} provides a further numerical
verification by showing that the surrogate values at the optimized designs
also stabilize around the selected values of \(N\) and \(Q\). For the Gaussian tilting surrogate and for the Fisher-based designs, the nominal parameter is chosen as the prior mean \((7.5,7.5)\).

\paragraph{Even observability}

In the first test case, the two components  are observed with the same noise level, \(\sigma_1=\sigma_2=0.1\), making the two observation channels equally informative.
The empirical distributions of the estimation errors over \(1000\) Monte Carlo runs are shown in Figure~\ref{fig:harmonic_similar_obs_errors}.

\ErrorBoxplotFigure
  {csv/harmonic_similar_obs_errors.csv}
  {Empirical distributions of the parameter-estimation errors for the harmonic oscillator test case with similar observability over 1000 Monte Carlo runs. The box spans the interquartile range, whiskers show the 10th and 90th percentiles, the orange line denotes the median and the triangle
marks the mean. Both EIG-based surrogates perform similarly in this balanced regime and achieve the best overall accuracy.}
  {fig:harmonic_similar_obs_errors}

As seen in Figure~\ref{fig:harmonic_similar_obs_errors}, both EIG surrogates perform similarly in this balanced regime. In particular, although the instantaneous surrogate still tends to favor the first sensor, it nevertheless distributes measurements across both channels sufficiently well to preserve a satisfactory balance in observability. In this scenario, all designs obtained from the proposed EIG-motivated surrogates outperform the Fisher-based designs.  

This first experiment shows that, when the prior is simple and the two
parameters have comparable observability, both proposed surrogates behave well
and improve over classical Fisher-based designs.

\paragraph{Uneven observability}

We next consider an asymmetric setting in which the observation-noise
standard deviations are set to \(\sigma_1=0.1\) and \(\sigma_2=0.12\). The corresponding error distributions are reported in Figure~\ref{fig:harmonic_uneven_obs_errors}.

\ErrorBoxplotFigure
  {csv/harmonic_uneven_obs_errors.csv}
  {Empirical distributions of the parameter-estimation errors for the harmonic oscillator test case with uneven observability over 1000 Monte Carlo runs. The box spans the interquartile range, whiskers show the 10th and 90th percentiles, the orange line denotes the median and the triangle
marks the mean. The instantaneous surrogate strongly favors the first component, whereas the tilting surrogate provides more balanced reconstructions.}
  {fig:harmonic_uneven_obs_errors}

Figure~\ref{fig:harmonic_uneven_obs_errors} reveals a clear limitation of the instantaneous surrogate in this regime. Because it is driven by immediate information gain only, it allocates  all measurements to the first sensor. This leads to highly accurate estimation of \(\theta_1\), but leaves \(\theta_2\) informed by the prior alone, which degrades the overall quality of parameter reconstruction.  By contrast, the tilting surrogate avoids this concentration effect, produces a more balanced sensing strategy, and still outperforms the Fisher-based designs in this example.

\subsection{Lotka-Volterra}

As a second benchmark problem, we consider a controlled Lotka-Volterra system over the time interval \([0,12]\), following the formulation proposed in \cite{Sager2006,Plate2026}. This example is intended to assess the behavior of the different design criteria under more complex prior distributions, and in particular to highlight the impact of prior multimodality on the resulting designs.

The state variable is defined by the two population levels,
\[
x(t)=\bigl(x_1(t),\,x_2(t)\bigr)^\top \in \mathbb{R}^2,
\]
and the dynamics are given by
\begin{equation}
\left\{
\begin{aligned}
\dot{x}_1(t) &= x_1(t) - \theta_1 x_1(t)x_2(t) - 0.4\,u(t)x_1(t),\\
\dot{x}_2(t) &= -x_2(t) + \theta_2 x_1(t)x_2(t) - 0.2\,u(t)x_2(t),
\end{aligned}
\right.
\end{equation}
where \(u(t)\in[0,1]\) is a piecewise-constant control input, assumed to be constant on each element of a uniform partition of \([0,12]\) into \(12\) subintervals, and \(\theta = (\theta_1,\theta_2)^\top \in \mathbb{R}_+^2\)
is the unknown parameter. The initial state is set to
\(x(0) = (0.5, 0.7)^\top.\)

Two sensors are available, each measuring the population of one species
\begin{equation}
y_1(t) = x_1(t) + \varepsilon_{1}(t)
\quad \text{and} \quad
y_2(t) = x_2(t) + \varepsilon_{2}(t),
\end{equation}
where \(\varepsilon_{1}(t)\) and \(\varepsilon_{2}(t)\) are centered Gaussian random variables, both with variance \(0.2\).

The designs are restricted to 10 distinct observation times, obtained by discretizing the relaxed optimal design with a minimum separation of 
0.25 time units. Each scalar Gaussian noise variable is discretized using a degree 4 Gauss–Hermite quadrature rule. For the Gaussian tilting surrogate and for the Fisher-based designs, the nominal parameter is chosen as the prior mean in each scenario. 

\paragraph{Log-normal prior}
We first consider a log-normal prior on the parameter \(\theta\), that is, with $\mathbf{1} = (1,1)^\top$,  $\log \theta \sim \mathcal{N}\bigl(\log(2)\,\mathbf{1},\,0.2\,I_2\bigr)$. Its discrete approximation is constructed using a degree-\(4\) Gauss--Hermite quadrature rule, with the quadrature nodes mapped through the exponential transformation. 

In this setting, the Fisher-based designs are already known to perform well, and only limited improvement is expected from the EIG-based surrogates. The empirical distributions of the estimation errors over \(1000\) Monte Carlo runs are shown in Figure~\ref{fig:LV_log_normal_errors}.

\ErrorBoxplotFigure
  {csv/LV_log_normal_errors.csv}
  {Empirical distributions of the parameter-estimation errors for the Lotka-Volterra test case with log-normal prior over 1000 Monte Carlo runs. The box spans the interquartile range, whiskers show the 10th and 90th percentiles, the orange line denotes the median and the triangle
marks the mean. All designs achieve comparable performance in this case, although the EIG-based surrogates remain slightly better overall.}
  {fig:LV_log_normal_errors}

As shown in Figure~\ref{fig:LV_log_normal_errors}, all designs perform similarly in this case, with only small differences between methods. This is expected under a narrow unimodal log-normal prior that remains close to a Gaussian distribution, so that a single nominal parameter already provides a reasonable local approximation. In this setting, the EIG-based surrogates bring only limited improvements over the Fisher-based designs.

\paragraph{Log-normal mixture prior}
We finally consider a more challenging prior given by an equal-weight mixture of two log-normal components, one associated with \(\mathcal{N}\bigl(\log 2\, \mathbf{1},\,0.2\,I_2\bigr).\) and the other with \(\mathcal{N}\bigl(\log 10 \,\mathbf{1},\,0.05\,I_2\bigr)\). Its discrete approximation is obtained by combining two degree-4 Gauss–Hermite quadrature rules, one for each mixture component. 

The main difficulty in this setting comes from the bimodal structure of the prior. For Fisher-based designs, which rely on a single nominal linearization point, the choice of that point becomes ambiguous and may lead to poor designs. By contrast, the instantaneous surrogate incorporates the multimodal structure directly through the prior discretization. The Gaussian tilting surrogate lies in between: it still depends on a single linearization point, but it accounts for prior multimodality. The corresponding error distributions are displayed in Figure~\ref{fig:LV_log_normal_mixt_errors}.

\ErrorBoxplotFigureWithSurrogateZoom
  {csv/LV_log_normal_mixt_errors.csv}
  {Empirical distributions of the parameter-estimation errors for the Lotka-Volterra test case with log-normal mixture prior over 1000 Monte Carlo runs. The box spans the interquartile range, whiskers show the 10th and 90th percentiles, the orange line denotes the median, and the triangle marks the mean. The Fisher-based designs deteriorate significantly in this bimodal setting, whereas the EIG-based surrogates remain effective and better adapt to the multimodal prior structure.}
  {fig:LV_log_normal_mixt_errors}
  {100}

Figure~\ref{fig:LV_log_normal_mixt_errors} shows a clear separation between the methods. The Fisher-based designs fail to produce reliable parameter estimates in this bimodal setting, whereas the EIG-based surrogates remain effective and generate designs adapted to the complex prior geometry. In particular, although the Gaussian tilting surrogate relies on the same poor nominal linearization point as the Fisher-based designs, it still leads to efficient parameter estimation. The proposed surrogates are thus more robust to prior complexity in this example than criteria derived solely from a local Fisher approximation.

\subsection{Comparison with the full-sequence EIG}

The previous experiments evaluated the various designs through downstream
estimation accuracy. We now compare the same designs directly on the Bayesian
objective by estimating the full-sequence EIG with a nested Monte Carlo
log-sum-exp estimator.  For each design, we draw \(N = 1{,}000\) prior samples and propagate
them through the dynamics. For each of \(S = 200\) noise replications,
the log-likelihood is accumulated over the full observation sequence,
and the mutual information is estimated via the standard nested
log-sum-exp over all particle pairs. 

Table~\ref{tab:cross_evaluation} reports the estimated full-sequence EIG and
the two surrogate values at each optimized design. Larger values indicate
better designs within each column. The gap between \(J_{\mathrm{inst}}\) and the EIG is partly due to the double-counting effect described in Lemma~4.1. In the examples considered, the Gaussian tilting surrogate reduces this gap. In all examples, one of the surrogate-optimized designs attains the
largest estimated EIG.

\begin{table}[htbp]
\centering
\caption{Surrogate and EIG values at each optimized design. Bold
indicates the highest EIG in each example.}
\label{tab:cross_evaluation}
\small
\begin{tabular}{ll rrr @{\qquad} ll rrr}
\toprule
\multicolumn{5}{c}{Harmonic oscillator}
& \multicolumn{5}{c}{Lotka--Volterra} \\
\cmidrule(r){1-5} \cmidrule(l){6-10}
Case & Design & $J_{\mathrm{inst}}$ & $J_{\mathrm{tilt}}$ & EIG
& Case & Design & $J_{\mathrm{inst}}$ & $J_{\mathrm{tilt}}$ & EIG \\
\midrule
\multirow{5}{*}{even}
  & A-opt     & \V\harmsim{1}{J_inst} & \V\harmsim{1}{J_tilt} & \Ve\harmsim{1}
  && A-opt     & \V\lvuni{1}{J_inst} & \V\lvuni{1}{J_tilt} & \Ve\lvuni{1} \\
  & Linearised
   & \V\harmsim{2}{J_inst} & \V\harmsim{2}{J_tilt} & \Ve\harmsim{2}
  && Linearised
   & \V\lvuni{2}{J_inst} & \V\lvuni{2}{J_tilt} & \Ve\lvuni{2} \\
  & EIG-inst  & \V\harmsim{3}{J_inst} & \V\harmsim{3}{J_tilt} & \Ve\harmsim{3}
  && EIG-inst  & \V\lvuni{3}{J_inst} & \V\lvuni{3}{J_tilt} & \Ve\lvuni{3} \\
  & EIG-tilt  & \V\harmsim{4}{J_inst} & \V\harmsim{4}{J_tilt} & \Ve\harmsim{4}
  && EIG-tilt  & \V\lvuni{4}{J_inst} & \V\lvuni{4}{J_tilt} & \Ve\lvuni{4} \\
\midrule
\multirow{5}{*}{uneven}
  & A-opt     & \V\harmunev{1}{J_inst} & \V\harmunev{1}{J_tilt} & \Ve\harmunev{1}
  && A-opt     & \V\lvbi{1}{J_inst} & \V\lvbi{1}{J_tilt} & \Ve\lvbi{1} \\
  & Linearised
   & \V\harmunev{2}{J_inst} & \V\harmunev{2}{J_tilt} & \Ve\harmunev{2}
  && Linearised
   & \V\lvbi{2}{J_inst} & \V\lvbi{2}{J_tilt} & \Ve\lvbi{2} \\
  & EIG-inst  & \V\harmunev{3}{J_inst} & \V\harmunev{3}{J_tilt} & \Ve\harmunev{3}
  && EIG-inst  & \V\lvbi{3}{J_inst} & \V\lvbi{3}{J_tilt} & \Ve\lvbi{3} \\
  & EIG-tilt  & \V\harmunev{4}{J_inst} & \V\harmunev{4}{J_tilt} & \Ve\harmunev{4}
  && EIG-tilt  & \V\lvbi{4}{J_inst} & \V\lvbi{4}{J_tilt} & \Ve\lvbi{4} \\
\bottomrule
\end{tabular}
\end{table}

\subsection{Computational cost}
\label{sec:cost}

We report the computational workload of the main criteria in
Table~\ref{tab:complexity}. The table separates the dimension of the
augmented ODE system propagated over \(N_t\) time steps from the dominant
likelihood and log-sum-exp operations applied to the resulting trajectories.
These counts should be interpreted as leading-order workload estimates, up to
model-dependent constants.

\begin{table}[htbp]
\centering
\caption{Computational workload per objective evaluation. The first column
gives the dimension of the augmented ODE system propagated over \(N_t\) time
steps. The second column gives the dominant cost of likelihood and
log-sum-exp operations applied after trajectory propagation.}
\label{tab:complexity}
\small
\begin{tabular}{l l l}
\toprule
Criterion & Augmented ODE dimension & Dominant likelihood-sum cost \\
\midrule
A/D-optimal
  & \(n_x + n_x n_\theta + \frac{n_\theta(n_\theta+1)}{2} + 1\)
  & --- \\[0.8ex]
Linearised
  & \(n_x + n_x n_\theta + \frac{n_\theta(n_\theta+1)}{2} + 1\)
  & \(O(n_\theta^3)\) terminal log-determinant \\[0.8ex]
\(J_{\mathrm{inst}}\)
  & \(N n_x + 1\)
  & \(O(N_t\,M_{\mathrm{cfg}}\,Q\,N^2)\) \\[0.8ex]
\(J_{\mathrm{tilt}}\)
  & \(N n_x + n_x + n_x n_\theta
     + \frac{n_\theta(n_\theta+1)}{2} + N + 1\)
  & \(O(N_t\,M_{\mathrm{cfg}}\,Q\,N^2)
     + O(N_t\,N\,n_\theta^2)\) \\
\bottomrule
\end{tabular}
\end{table}

Here \(N_t\) denotes the number of time steps, \(N\) the number of prior
nodes, \(Q\) the number of deterministic noise quadrature nodes per active
observation block, and \(M_{\mathrm{cfg}}\) the number of non-empty sensor
configurations evaluated at each time step
(\(M_{\mathrm{cfg}}=2^{n_{\exp}}-1\) in the present formulation).
In terms of forward-model evaluations, A/D-optimality and the linearised
criterion require one nominal trajectory, \(J_{\mathrm{inst}}\) requires
\(N\) parameter trajectories, and \(J_{\mathrm{tilt}}\) requires these same
\(N\) trajectories plus one reference trajectory. The trajectories are reused
across all \(Q\) noise nodes and likelihood sums. Gradients are computed by
reverse-mode automatic differentiation of the same computational graph and
therefore have the same leading-order scaling, up to a constant factor.

Figure~\ref{fig:scaling} reports build and solve times, IPOPT iteration counts,
and surrogate values at the optimum as functions of \(N\) and \(Q\) on the
Lotka--Volterra unimodal example. All computations were performed on a MacBook Pro equipped with an
Apple M4 Pro processor and 48~GB of RAM. The timings as \(N\) varies are consistent
with the \(O(N^2)\) pairwise likelihood sums reported in
Table~\ref{tab:complexity}. The dependence on \(Q\) is less clear over the
tested range, although no contradiction with the expected linear quadrature
cost is observed. The IPOPT iteration counts fluctuate and are not used to
infer scaling. The surrogate values stabilize around the quadrature settings
used in the main experiments, \(N=36\) and \(Q=25\).

\pgfplotsset{
  scalingcommon/.style={
    width=0.44\textwidth,
    height=0.24\textwidth,
    tick pos=left,
    grid=none,
    axis line style={black, line width=0.4pt},
    tick style={black, line width=0.4pt},
    tick label style={font=\small},
    label style={font=\small},
    title style={font=\small},
    every axis plot/.append style={
      line width=0.8pt,
      mark size=1.6pt
    },
    legend style={
      font=\scriptsize,
      draw=none,
      fill=none,
      cells={anchor=west},
      /tikz/every even column/.append style={column sep=0.35cm}
    },
    legend columns=5
  },
  instsolve/.style={
    color=blue!70!black,
    solid,
    mark=*
  },
  instbuild/.style={
    color=blue!70!black,
    densely dashed,
    mark=o
  },
  tiltsolve/.style={
    color=orange!85!black,
    solid,
    mark=triangle*
  },
  tiltbuild/.style={
    color=orange!85!black,
    densely dashed,
    mark=triangle*
  },
  refline/.style={
    color=black!45,
    dashed,
    thin,
    mark=none
  }
}

\begin{figure}[htbp]
\centering

\begin{tikzpicture}
\begin{groupplot}[
  group style={
    group size=2 by 3,
    horizontal sep=1.35cm,
    vertical sep=1.35cm
  },
  scalingcommon
]


\nextgroupplot[
  title={Varying \(N\)},
  ylabel={Time (s)},
  xmode=log,
  ymode=log,
  legend to name=combinedlegend
]
\addplot[instbuild] table[x=N, y=build_s] {\Ninst};
\addlegendentry{\(J_{\mathrm{inst}}\) build}
\addplot[instsolve] table[x=N, y=solve_s] {\Ninst};
\addlegendentry{\(J_{\mathrm{inst}}\) solve}
\addplot[tiltbuild] table[x=N, y=build_s] {\Ntilt};
\addlegendentry{\(J_{\mathrm{tilt}}\) build}
\addplot[tiltsolve] table[x=N, y=solve_s] {\Ntilt};
\addlegendentry{\(J_{\mathrm{tilt}}\) solve}
\addplot[refline] table[x=N, y=ref_solve] {\Ninst};
\addlegendentry{reference slope}

\nextgroupplot[
  title={Varying \(Q\)},
  xmode=log,
  ymode=log
]
\addplot[instbuild] table[x=Q, y=build_s] {\Qinst};
\addplot[instsolve] table[x=Q, y=solve_s] {\Qinst};
\addplot[tiltbuild] table[x=Q, y=build_s] {\Qtilt};
\addplot[tiltsolve] table[x=Q, y=solve_s] {\Qtilt};
\addplot[refline] table[x=Q, y=ref_solve] {\Qinst};


\nextgroupplot[
  ylabel={IPOPT iterations}
]
\addplot[instsolve] table[x=N, y=n_iter] {\Ninst};
\addplot[tiltsolve] table[x=N, y=n_iter] {\Ntilt};

\nextgroupplot
\addplot[instsolve] table[x=Q, y=n_iter] {\Qinst};
\addplot[tiltsolve] table[x=Q, y=n_iter] {\Qtilt};


\nextgroupplot[
  xlabel={\(N\)},
  ylabel={Value at optimum}
]
\addplot[instsolve] table[x=N, y=J_value] {\Ninst};
\addplot[tiltsolve] table[x=N, y=J_value] {\Ntilt};

\nextgroupplot[
  xlabel={\(Q\)}
]
\addplot[instsolve] table[x=Q, y=J_value] {\Qinst};
\addplot[tiltsolve] table[x=Q, y=J_value] {\Qtilt};

\end{groupplot}
\end{tikzpicture}

\vspace{0.15cm}

\pgfplotslegendfromname{combinedlegend}

\vspace{0.1cm}

\caption{
Scaling of build and solve time, IPOPT iteration count, and surrogate value at
the optimum as a function of the number of prior nodes~\(N\) (left column,
\(Q=25\) fixed) and noise quadrature nodes~\(Q\) (right column, \(N=36\) fixed),
for \(J_{\mathrm{inst}}\) and \(J_{\mathrm{tilt}}\) on the Lotka--Volterra
unimodal example. The gray dashed lines show \(O(N^2)\) and \(O(Q)\) reference
slopes. The \(A\)-optimal and local linear-Gaussian EIG baselines are independent of
\(N\) and \(Q\) and are omitted for clarity.
}
\label{fig:scaling}
\end{figure}

%% file: sections/conclusion.tex
In this paper, we introduced two surrogates of the expected information gain that are compatible with efficient adjoint-based optimization methods. Our goal was to move beyond Fisher-information-based design criteria in controlled dynamical systems while retaining enough structure to make optimization tractable.

The proposed surrogates rely on a chain-rule decomposition of the expected information gain together with tractable approximations of the posterior distribution of the unknown parameter given past observations. In the instantaneous surrogate, this posterior is replaced by the prior, whereas in the Gaussian tilting surrogate it is approximated through a Fisher-driven Gaussian tilting. 

On the theoretical side, we showed that the instantaneous surrogate may double count information, although it can still approximate the expected information gain well in favorable regimes. The Gaussian tilting surrogate enjoys stronger consistency properties and is exact in the linear-Gaussian setting. 

Our numerical experiments illustrate both the strengths and the limitations of the proposed criteria. In relatively simple or nearly Gaussian settings, the gains over Fisher-based designs remain modest. In contrast, for non-Gaussian or multimodal priors, the EIG-motivated surrogates yield clearer improvements. The experiments also highlight a limitation of the instantaneous surrogate, which may allocate too much sensing effort to the easiest-to-observe components, whereas the tilting surrogate produces more balanced sensing strategies. 

One direction for improvement is to better control weight degeneracy in the tilting approximation. For instance, one could temper the tilting surrogate by introducing a parameter $\alpha \in (0,1]$ in the quadratic exponent, thereby slowing down the concentration of the surrogate weights. Such a parameter could be selected adaptively, for instance using effective sample size criteria inspired by particle filtering \cite{Douc2005,Li2015,lagracie2025}.

As with most EIG-based approaches, the main challenge remains scalability in high dimension. A natural next step is therefore to combine the proposed methods with dimension-reduction techniques such as truncated SVD or lumping \cite{Lopez2015,Benner2021,Plate2026}. Moreover, the optimization problem considered here relies on a discretized representation of the prior. While this is sufficient for the purposes of the present paper, improving the scalability of this prior representation remains an important direction for future work. A further step would be to remove the need for a fixed discrete prior approximation altogether, for instance through variational families \cite{Dong2025} or transport maps \cite{Koval2024}.

%% file: sections/appendix.tex
This annex contains the two technical lemmas required for the proof of Proposition~\ref{prop:consistency_LG}.

\begin{lemma} \label{lmm:weak_conv_qk}
Let $(p_k)_{k\ge1}$ be a sequence of probability measures on $\mathbb{R}^d$  with mean $m_k$ and finite second moment such that
\[
p_k \to p_0 := \mathcal N(m_0,\Sigma_0)
\quad\text{in } W_2.
\]

Let $F$ be a symmetric positive semidefinite matrix, and consider the probability measures 

\[
q^{(k)}(d\theta)
=
\frac{\phi^{(k)}(\theta)}{Z^{(k)}}\,p_k(d\theta)\quad\text{and}\quad
q^{(0)}(d\theta)
=
\frac{\phi(\theta)}{Z^{(0)}}\,p_0(d\theta)
\]

where
\[
\phi^{(k)}(\theta)
=
\exp\!\left(-\tfrac12(\theta-m_k)^\top F (\theta-m_k)\right),
\qquad
\phi(\theta)
=
\exp\!\left(-\tfrac12(\theta-m_0)^\top F (\theta-m_0)\right),
\]
and
\[
Z^{(k)}=\int \phi^{(k)}(\theta)p_k(d\theta),
\qquad
Z^{(0)}=\int \phi(\theta)p_0(d\theta).
\]

Let $(f_k)$ be continuous functions converging locally uniformly to $f$ and satisfying
\[
|f_k(\theta)| \le C(1+\|\theta\|^2).
\]

Then
\[
\mathbb{E}_{q^{(k)}}[f_k(\theta)]
\;\longrightarrow\;
\mathbb{E}_{q^{(0)}}[f(\theta)] .
\]
\end{lemma}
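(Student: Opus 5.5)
The approach is to write $\mathbb{E}_{q^{(k)}}[f_k] = N_k/Z^{(k)}$ with $N_k := \int f_k\,\phi^{(k)}\,dp_k$, and to show separately that $N_k \to N_0 := \int f\,\phi\,dp_0$ and $Z^{(k)} \to Z^{(0)}$. Since $Z^{(0)}>0$, because $\phi>0$ everywhere and $p_0$ is a probability measure, the ratio then converges to $N_0/Z^{(0)} = \mathbb{E}_{q^{(0)}}[f]$. The normalization convergence is the special case $f_k\equiv 1$ of the numerator argument, so it suffices to treat $N_k$.

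\textbf{Step 1: the mean and the tilting factor.}
First, $W_2$ convergence implies $m_k\to m_0$, since $\|m_k-m_0\|\le W_1(p_k,p_0)\le W_2(p_k,p_0)$. Next, $\phi^{(k)}\to\phi$ uniformly on $\mathbb{R}^d$. Indeed, $\theta\mapsto \exp(-\tfrac12 (\theta-a)^\top F(\theta-a))$ is globally Lipschitz in the shift $a$, uniformly in $\theta$. Its gradient with respect to $a$ is $F(\theta-a)e^{-\frac12(\theta-a)^\top F(\theta-a)}$, which is bounded because $\|F^{1/2}v\|e^{-\|F^{1/2}v\|^2/2}$ is bounded. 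Also $0<\phi^{(k)}\le 1$. Hence $g_k:=f_k\phi^{(k)}$ converges locally uniformly to $g:=f\phi$, and $|g_k(\theta)|\le C(1+\|\theta\|^2)$ with a constant independent of $k$.

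\textbf{Step 2: integration against $p_k$.}
The remaining claim is $\int g_k\,dp_k\to\int g\,dp_0$. I would split
\[
\int g_k\,dp_k-\int g\,dp_0=\int (g_k-g)\,dp_k+\Big(\int g\,dp_k-\int g\,dp_0\Big).
\]
The second term tends to zero by the standard characterization of $W_2$ convergence: weak convergence together with convergence of second moments. Under this characterization $\int h\,dp_k\to\int h\,dp_0$ for every continuous $h$ of at most quadratic growth, and $g$ is such a function, being continuous as a locally uniform limit of continuous functions.

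For the first term, fix $R>0$ and split the integral over the ball $B_R$ and its complement. On $B_R$ it is bounded by $\sup_{B_R}|g_k-g|$, which tends to zero. Off $B_R$ it is bounded by $2C\int_{\|\theta\|>R}(1+\|\theta\|^2)\,dp_k$. Convergence of second moments plus weak convergence gives uniform integrability of $\|\theta\|^2$ under $(p_k)$, so this tail is small uniformly in $k$ once $R$ is large. Letting $k\to\infty$ and then $R\to\infty$ finishes the first term.

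\textbf{Main obstacle.}
The main care point is this uniform integrability step. The $W_2$ topology is exactly what controls quadratic tails, so I would cite the equivalence from Villani's theorem ($W_2$ convergence $\iff$ weak convergence plus convergence of second moments) and derive uniform integrability from it. Specifically, $\limsup_k\int_{\|\theta\|>R}\|\theta\|^2\,dp_k \le \int_{\|\theta\|\ge R-1}\|\theta\|^2\,dp_0$, using a continuous cutoff. The rest is routine bookkeeping.
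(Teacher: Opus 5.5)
Your proposal is correct and follows essentially the same route as the paper. In both, $W_2$ convergence gives $m_k\to m_0$ and hence convergence of the tilting factors. The numerator $\int f_k\phi^{(k)}\,dp_k$ is then split into a part that vanishes by local uniform convergence plus quadratic-tail uniform integrability, and a part handled by $W_2$ convergence. The argument concludes from $Z^{(k)}\to Z^{(0)}>0$.

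The only structural difference is that the paper uses three terms where you merge the first two into $\int(g_k-g)\,dp_k$. Your version is also slightly more careful in two places. You justify the uniform integrability explicitly. For the last term you invoke convergence against continuous test functions of quadratic growth, whereas the paper appeals to bare weak convergence. That distinction matters: $F$ is only positive semidefinite, so $f\phi$ need not be bounded.
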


\begin{proof}
Convergence in $W_2$ is equivalent to weak convergence together with convergence of second moments \cite{Villani2009}. In particular $m_k\to m_0$ which implies that $\phi^{(k)}\to\phi$ locally uniformly.

Let $(f_k)$ be a sequence of continuous functions converging locally
uniformly to $f$ and satisfying the uniform quadratic bound
\(
|f_k(\theta)| \le C(1+\|\theta\|^2)
\) for all $k$.

Since $\phi^{(k)}\le1$, the functions $f_k\phi^{(k)}$ also satisfy a
quadratic bound and therefore form a uniformly integrable family under
the uniform second-moment bound on $(p_k)$.

We write
\begin{align}\label{eq:decomp_convergence}
\int f_k(\theta)\phi^{(k)}(\theta)\,p_k(d\theta)
&=
\int f_k(\theta)\big(\phi^{(k)}(\theta)-\phi(\theta)\big)\,p_k(d\theta)
\\
&\quad+
\int (f_k(\theta)-f(\theta))\phi(\theta)\,p_k(d\theta)
\\
&\quad+
\int f(\theta)\phi(\theta)\,p_k(d\theta).
\end{align}

The first two terms converge to zero by the local uniform convergence
$\phi^{(k)}\to\phi$ and $f_k\to f$, combined with the quadratic
growth bound and the uniform second-moment bound on $(p_k)$,
which ensures uniform integrability.
The third term converges to
\(
\int f(\theta)\phi(\theta)\,p_0(d\theta)
\)
by the weak convergence of $p_k$ implied by $W_2$ convergence.

In particular, this yields the convergence of the normalization constants
\[
Z^{(k)} := \int \phi^{(k)}(\theta)\,p_k(d\theta)
\;\longrightarrow\;
Z^{(0)} := \int \phi(\theta)\,p_0(d\theta) > 0.
\]

Moreover, since
\(
q^{(k)}(d\theta)
=
\frac{\phi^{(k)}(\theta)}{Z^{(k)}}\,p_k(d\theta)\) and 
\(
q^{(0)}(d\theta)
=
\frac{\phi(\theta)}{Z^{(0)}}\,p_0(d\theta),
\)
we obtain
\begin{equation}
\mathbb{E}_{q^{(k)}}[f_k(\theta)]
\;\longrightarrow\;
\mathbb{E}_{q^{(0)}}[f(\theta)]. 
\end{equation}

\end{proof}

\begin{lemma}\label{lmm:behavior_denom_eig_gauss}
Let $(p_k)_{k\ge1}$, $q^{(k)}$, and $q^{(0)}$ be as in Lemma~\ref{lmm:weak_conv_qk}.
Let \(\ell(y\mid\theta)\) be the likelihood associated with the linear-Gaussian observation model
\[
y_d = H_{d}\theta + b_{d} + \varepsilon_d,
\qquad
\varepsilon_d\sim\mathcal N(0,R_{d}),
\qquad d=1,\dots,n_{\mathrm{exp}},
\]
with positive definite \(R_{d}\).

Define
\[
m_k(y):=\int \ell(y\mid\theta')\,q^{(k)}(d\theta'),
\qquad
m(y):=\int \ell(y\mid\theta')\,q^{(0)}(d\theta'),
\]
and
\[
g_k(\theta):=
-\mathbb{E}_{Y\sim \ell(\cdot\mid\theta)}[\log m_k(Y)],
\qquad
g(\theta):=
-\mathbb{E}_{Y\sim \ell(\cdot\mid\theta)}[\log m(Y)].
\]

Then \(g_k\to g\) locally uniformly on \(\mathbb R^{n_\theta}\), and there exists \(C>0\) such that
\[
|g_k(\theta)|\le C(1+\|\theta\|^2)
\qquad\text{for all }k,\theta.
\]
\end{lemma}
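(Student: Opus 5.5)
Our plan is to get two-sided bounds on $\log m_k(y)$ that are quadratic in $y$, uniformly in $k$, and then to pass to the limit under the Gaussian expectation over $Y$. Write $H$, $b$, $R$ for the stacked observation matrix, offset and block-diagonal noise covariance (positive definite). Then $\ell(y\mid\theta)=(2\pi)^{-n/2}\det(R)^{-1/2}\exp\bigl(-\tfrac12\|y-H\theta-b\|_{R^{-1}}^2\bigr)$.

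\emph{Upper bound.} Since $\ell\le (2\pi)^{-n/2}\det(R)^{-1/2}$, we have $\log m_k(y)\le c_0$ for all $k,y$.

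\emph{Lower bound.} Apply Jensen's inequality to the concave logarithm under $q^{(k)}$:
\[
\log m_k(y)\ge \mathbb{E}_{q^{(k)}}[\log\ell(y\mid\theta')]\ge -c_1-\|y-b\|_{R^{-1}}^2-\|H\|^2\|R^{-1}\|\,\mathbb{E}_{q^{(k)}}\|\theta'\|^2 .
\]
Here $\mathbb{E}_{q^{(k)}}\|\theta'\|^2\le \mathbb{E}_{p_k}\|\theta'\|^2/Z^{(k)}$ because $\phi^{(k)}\le 1$. This quantity is bounded uniformly in $k$: by Lemma~\ref{lmm:weak_conv_qk}, $Z^{(k)}\to Z^{(0)}>0$, so $Z^{(k)}$ is bounded below for large $k$, and each $Z^{(k)}$ is positive. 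The second moments of $p_k$ are bounded since they converge under $W_2$. We therefore obtain $|\log m_k(y)|\le C'(1+\|y\|^2)$ uniformly in $k$, and the same bound holds for $m$. Taking the expectation over $Y=H\theta+b+\varepsilon$ gives $\mathbb{E}\|Y\|^2\le C''(1+\|\theta\|^2)$, which is the quadratic bound on $g_k$ and $g$.

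\emph{Pointwise convergence of the marginals.} For fixed $y$, the function $\theta'\mapsto\ell(y\mid\theta')$ is bounded and continuous. Applying Lemma~\ref{lmm:weak_conv_qk} with $f_k=f=\ell(y\mid\cdot)$ gives $m_k(y)\to m(y)$. I would strengthen this to local uniform convergence in $y$. On a compact set of $y$, the family $\{\ell(y\mid\cdot)\}$ is uniformly bounded and equicontinuous in $y$ uniformly over $\theta'$, since $\nabla_y\ell$ is bounded. This equicontinuity, together with the uniform second-moment bound on $q^{(k)}$ (tightness), upgrades pointwise convergence to uniform convergence on compacts. Because $m(y)>0$ is continuous, it is bounded below on compacts, so $\log m_k\to\log m$ locally uniformly.

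\emph{Local uniform convergence of $g_k$ (main obstacle).} Write
\[
g_k(\theta)-g(\theta)=-\mathbb{E}_\varepsilon\bigl[(\log m_k-\log m)(H\theta+b+\varepsilon)\bigr]
\]
and split the $\varepsilon$-integral into $\{\|\varepsilon\|\le L\}$ and its complement. For $\theta$ in a compact set $K$, the arguments on the first region lie in a fixed compact, where the difference tends to $0$ uniformly. On the tail region, the quadratic bound $|\log m_k-\log m|\le 2C'(1+\|y\|^2)$ gives a contribution of at most $C\,\mathbb{E}[(1+\|\theta\|^2+\|\varepsilon\|^2)\mathbf{1}_{\|\varepsilon\|>L}]$. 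This is small uniformly over $\theta\in K$ once $L$ is large, by Gaussian tails. Choosing $L$ first and then $k$ concludes. The delicate point is making sure the Jensen lower bound and the uniform moment control are genuinely uniform in $k$; this rests on $Z^{(k)}$ staying bounded away from zero, which is exactly what Lemma~\ref{lmm:weak_conv_qk} provides.
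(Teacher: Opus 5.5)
Your proposal is correct and follows essentially the same route as the paper. Both arguments use a Jensen lower bound giving $|\log m_k(y)|\le C(1+\|y\|^2)$ uniformly in $k$, pointwise convergence $m_k(y)\to m(y)$ via Lemma~\ref{lmm:weak_conv_qk}, an equi-Lipschitz upgrade to local uniform convergence, positivity of $m$ on compacts, and a truncation argument controlled by Gaussian tails.

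The differences are minor. You obtain the uniform second-moment bound from $\phi^{(k)}\le 1$ and $\inf_k Z^{(k)}>0$, instead of applying Lemma~\ref{lmm:weak_conv_qk} to $\|\theta\|^2$. You use that $\nabla_y\ell$ is globally bounded, so the tightness remark is unnecessary there. You truncate in $\varepsilon$ rather than in $Y$, which makes uniformity over $\theta$ in a compact set immediate.
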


\begin{proof}
Let \(\Theta'\sim q^{(k)}\). Then
\[
m_k(y)
=
c\,\E\!\left[
\exp\!\left(
-\tfrac12
\sum_{d=1}^{n_{\mathrm{exp}}}
\|y_d-H_d\Theta'-b_d\|_{R_d^{-1}}^2
\right)
\right].
\]

Since \(0<m_k(y)\le c\), it remains to control \(-\log m_k(y)\). Applying Lemma~\ref{lmm:weak_conv_qk} with \(f_k(\theta)=\|\theta\|^2\), we obtain \(\mathbb{E}_{q^{(k)}}[\|\theta\|^2]\to \mathbb{E}_{q^{(0)}}[\|\theta\|^2]\), hence \(\bigl(q^{(k)}\bigr)\) has uniformly bounded second moments. Combined with Jensen's inequality, this gives
\[
-\log m_k(y)
\le
C\,
\E\!\left[
1+\|y\|^2+\|\Theta'\|^2
\right]
\le
C(1+\|y\|^2),
\]

uniformly in \(k\). Hence

\begin{equation}\label{eq:quad_bound_mk}
|\log m_k(y)|\le C(1+\|y\|^2),
\end{equation}
uniformly in \(k\). Taking expectation with respect to \(Y\sim \ell(\cdot\mid\theta)\),
\[
|g_k(\theta)|
\le \E[|\log m_k(Y)|]
\le C\bigl(1+\E[\|Y\|^2]\bigr)
\le C(1+\|\theta\|^2),
\]
since \(Y\mid\theta\) is Gaussian with second moment bounded by \(C(1+\|\theta\|^2)\).

Next, Lemma~\ref{lmm:weak_conv_qk} gives that \(q^{(k)}\) converges weakly toward \(q^{(0)}\). For each fixed \(y\), the map
\(\theta'\mapsto \ell(y\mid\theta')\) is bounded and continuous, so
\(
m_k(y)\to m(y).
\)

We next prove that the convergence is locally uniform in \(y\). Indeed, \(y\mapsto \ell(y\mid\theta')\) is \(C^1\), with
\[
\nabla_y \ell(y\mid\theta')=A(y,\theta')\,\ell(y\mid\theta'),
\]
where \(A(y,\theta')\) is affine in \((y,\theta')\). Thus, for every compact \(K\) in the observation space,
\[
\sup_{y\in K}\|\nabla_y \ell(y\mid\theta')\|\le C_K(1+\|\theta'\|).
\]
Using the uniform second-moment bound on \(q^{(k)}\),
\[
\sup_k\int \sup_{y\in K}\|\nabla_y \ell(y\mid\theta')\|\,q^{(k)}(d\theta')<\infty,
\]
so \((m_k)_k\) is equi-Lipschitz on \(K\). Combined with pointwise convergence, this yields
\(m_k\to m\) locally uniformly in \(y\).

Since \(m\) is continuous and strictly positive, every compact \(K\) admits \(\delta_K>0\) such that
\(m\ge \delta_K\) on \(K\). By local uniform convergence, \(m_k\ge \delta_K/2\) on \(K\) for all large \(k\), hence
\[
\log m_k\to \log m
\qquad\text{locally uniformly in } y.
\]

Finally, let \(K\subset\mathbb R^{n_\theta}\) be compact and \(Y\sim \ell(\cdot\mid\theta)\). Then
\[
|g_k(\theta)-g(\theta)|
\le
\E_{\ell(\cdot\mid\theta)}
\bigl[|\log m_k(Y)-\log m(Y)|\bigr].
\]
For \(R>0\), we split the expected value according to \(\{\|Y\|\le R\}\cup\{\|Y\|>R\}\). On \(\{\|Y\|\le R\}\), the integrand converges uniformly in \(y\), hence
\[
\sup_{\theta\in K}
\E\!\left[
|\log m_k(Y)-\log m(Y)|\,\mathbf 1_{\{\|Y\|\le R\}}
\right]\to 0.
\]
On \(\{\|Y\|>R\}\), \eqref{eq:quad_bound_mk} gives
\[
|\log m_k(y)-\log m(y)|\le C(1+\|y\|^2),
\]
so
\[
\sup_{\theta\in K}
\E_{\ell(\cdot\mid\theta)}
\!\left[
|\log m_k(Y)-\log m(Y)|\,\mathbf 1_{\{\|Y\|>R\}}
\right]
\le
C\sup_{\theta\in K}
\E_{\ell(\cdot\mid\theta)}
\!\left[(1+\|Y\|^2)\mathbf 1_{\{\|Y\|>R\}}\right].
\]
Because \(Y\mid\theta\) is Gaussian with mean affine in \(\theta\) and covariance independent of \(\theta\), the right-hand side tends to \(0\) as \(R\to\infty\), uniformly in \(\theta\in K\). Therefore
\[
\sup_{\theta\in K}|g_k(\theta)-g(\theta)|\to0,
\]
so \(g_k\to g\) locally uniformly in \(\theta\).
\end{proof}

%% file: main.bib
@article{Waechter2006,
  author = {W{\"a}chter, Andreas and Biegler, Lorenz T.},
  title = {On the implementation of an interior-point filter line-search algorithm for large-scale nonlinear programming},
  journal = {Mathematical Programming},
  volume = {106},
  pages = {25--57},
  year = {2006},
  doi = {10.1007/s10107-004-0559-y}
}

@misc{li2024logsobolev,
  author = {Li, Fengyi and Belhadji, Ayoub and Marzouk, Youssef},
  title = {Nonlinear {B}ayesian optimal experimental design using logarithmic {S}obolev inequalities},
  year = {2024},
  doi = {10.48550/arXiv.2402.15053}
}

@article{qi2025observationnoise,
  author = {Qi, Jie and Baker, Ruth E.},
  title = {Optimal experimental design for parameter estimation in the presence of observation noise},
  journal = {Mathematical Biosciences},
  volume = {392},
  pages = {109571},
  year = {2026},
  doi = {10.1016/j.mbs.2025.109571}
}

@article{dick2013high,
  author = {Dick, Josef and Kuo, Frances Y. and Sloan, Ian H.},
  title = {High-dimensional integration: The quasi-{Monte} {Carlo} way},
  journal = {Acta Numerica},
  volume = {22},
  pages = {133--288},
  year = {2013},
  doi = {10.1017/S0962492913000044}
}

@article{bungartz2004sparse,
  author = {Bungartz, Hans-Joachim and Griebel, Michael},
  title = {Sparse grids},
  journal = {Acta Numerica},
  volume = {13},
  pages = {147--269},
  year = {2004},
  doi = {10.1017/S0962492904000182}
}

@article{mentre1997optimal,
  author = {Mentr{\'e}, France and Mallet, Alain and Baccar, Doha},
  title = {Optimal design in random-effects regression models},
  journal = {Biometrika},
  volume = {84},
  pages = {429--442},
  year = {1997},
  doi = {10.1093/biomet/84.2.429}
}

@article{korkel2004numerical,
  author = {K{\"o}rkel, Stefan and Kostina, Ekaterina and Bock, Hans Georg and Schl{\"o}der, Johannes P.},
  title = {Numerical methods for optimal control problems in design of robust optimal experiments for nonlinear dynamic processes},
  journal = {Optimization Methods and Software},
  volume = {19},
  pages = {327--338},
  year = {2004},
  doi = {10.1080/10556780410001683078}
}

@article{pagendam2013optimal,
  author = {Pagendam, Dan E. and Pollett, Philip K.},
  title = {Optimal design of experimental epidemics},
  journal = {Journal of Statistical Planning and Inference},
  volume = {143},
  pages = {563--572},
  year = {2013},
  doi = {10.1016/j.jspi.2012.09.011}
}

@misc{maio2025,
  author = {Maio, Steven and Alexanderian, Alen},
  title = {On submodularity of the expected information gain},
  year = {2025},
  doi = {10.48550/arXiv.2505.04145}
}

@inproceedings{Rainforth2018,
  author = {Rainforth, Tom and Cornish, Rob and Yang, Hongseok and Warrington, Andrew and Wood, Frank},
  title = {On Nesting {M}onte {C}arlo Estimators},
  booktitle = {Proceedings of the 35th International Conference on Machine Learning},
  pages = {4267--4276},
  year = {2018},
  doi = {10.48550/arXiv.1709.06181}
}

@article{Paulson2019,
  author = {Paulson, Joel and Martin-Casas, Marc and Mesbah, Ali},
  title = {Optimal Bayesian experiment design for nonlinear dynamic systems with chance constraints},
  journal = {Journal of Process Control},
  volume = {77},
  pages = {155--171},
  year = {2019},
  doi = {10.1016/j.jprocont.2019.01.010}
}

@article{Overstall2019,
  author = {Overstall, Antony and Woods, David and Parker, Ben},
  title = {Bayesian Optimal Design for Ordinary Differential Equation Models With Application in Biological Science},
  journal = {Journal of the American Statistical Association},
  volume = {115},
  pages = {583--598},
  year = {2020},
  doi = {10.1080/01621459.2019.1617154}
}

@inproceedings{Busetto2009,
  author = {Busetto, Alberto Giovanni and Ong, Cheng Soon and Buhmann, Joachim},
  title = {Optimized expected information gain for nonlinear dynamical systems},
  booktitle = {Proceedings of the 26th International Conference on Machine Learning},
  pages = {97--104},
  year = {2009},
  doi = {10.1145/1553374.1553387}
}

@misc{Foster2019,
  author = {Foster, Adam and Jankowiak, Martin and Bingham, Eli and Horsfall, Paul and Teh, Yee Whye and Rainforth, Tom and Goodman, Noah},
  title = {Variational Estimators for Bayesian Optimal Experimental Design},
  year = {2019},
  doi = {10.48550/arXiv.1903.05480}
}

@article{Korkel2004,
  author = {K{\"o}rkel, Stefan and Kostina, Ekaterina and Bock, Hans Georg and Schl{\"o}der, Johannes P.},
  title = {Numerical methods for optimal control problems in design of robust optimal experiments for nonlinear dynamic processes},
  journal = {Optimization Methods and Software},
  volume = {19},
  pages = {327--338},
  year = {2004},
  doi = {10.1080/10556780410001683078}
}

@article{Franceschini2008,
  author = {Franceschini, Gaia and Macchietto, Sandro},
  title = {Model-based design of experiments for parameter precision: State of the art},
  journal = {Chemical Engineering Science},
  volume = {63},
  pages = {4846--4872},
  year = {2008},
  doi = {10.1016/j.ces.2007.11.034}
}

@article{Kreutz2009,
  author = {Kreutz, Clemens and Timmer, Jens},
  title = {Systems biology: experimental design},
  journal = {The FEBS Journal},
  volume = {276},
  pages = {923--942},
  year = {2009},
  doi = {10.1111/j.1742-4658.2008.06843.x}
}

@book{Martijn2005,
  editor = {Berger, Martijn P. F. and Wong, Weng Kee},
  title = {Applied Optimal Designs},
  publisher = {Wiley},
  year = {2005},
  doi = {10.1002/0470857005}
}

@article{Stone1959,
  author = {Stone, M.},
  title = {Application of a Measure of Information to the Design and Comparison of Regression Experiments},
  journal = {The Annals of Mathematical Statistics},
  volume = {30},
  pages = {55--70},
  year = {1959},
  doi = {10.1214/aoms/1177706359}
}

@article{Kiefer1958,
  author = {Kiefer, J.},
  title = {On the Nonrandomized Optimality and Randomized Nonoptimality of Symmetrical Designs},
  journal = {The Annals of Mathematical Statistics},
  volume = {29},
  pages = {675--699},
  year = {1958},
  doi = {10.1214/aoms/1177706530}
}

@article{Lindley1956,
  author = {Lindley, D. V.},
  title = {On a Measure of the Information Provided by an Experiment},
  journal = {The Annals of Mathematical Statistics},
  volume = {27},
  pages = {986--1005},
  year = {1956},
  doi = {10.1214/aoms/1177728069}
}

@article{Dong2025,
  author = {Dong, Jiayuan and Jacobsen, Christian and Khalloufi, Mehdi and Akram, Maryam and Liu, Wanjiao and Duraisamy, Karthik and Huan, Xun},
  title = {Variational Bayesian optimal experimental design with normalizing flows},
  journal = {Computer Methods in Applied Mechanics and Engineering},
  volume = {433},
  pages = {117457},
  year = {2025},
  doi = {10.1016/j.cma.2024.117457}
}

@article{Koval2024,
  author = {Koval, Karina and Herzog, Roland and Scheichl, Robert},
  title = {Tractable optimal experimental design using transport maps},
  journal = {Inverse Problems},
  volume = {40},
  pages = {125002},
  year = {2024},
  doi = {10.1088/1361-6420/ad8260}
}

@article{lagracie2025,
author = {Lagracie, Emma and de Montella, Luc},
title = {Particle Filtering for Nondeterministic Electrocardiographic Imaging},
journal = {SIAM Journal on Imaging Sciences},
volume = {19},
number = {3},
pages = {1409-1438},
year = {2026},
doi = {10.1137/25M1795492},
}

@article{Li2015,
  author = {Li, Tian-cheng and Villarrubia, Gabriel and Sun, Shu-dong and Corchado, Juan M. and Bajo, Javier},
  title = {Resampling methods for particle filtering: identical distribution, a new method, and comparable study},
  journal = {Frontiers of Information Technology \& Electronic Engineering},
  volume = {16},
  pages = {969--984},
  year = {2015},
  doi = {10.1631/FITEE.1500199}
}

@inproceedings{Douc2005,
  author = {Douc, R. and Capp{\'e}, O.},
  title = {Comparison of resampling schemes for particle filtering},
  booktitle = {Proceedings of the 4th International Symposium on Image and Signal Processing and Analysis},
  pages = {64--69},
  year = {2005},
  doi = {10.1109/ISPA.2005.195385}
}

@incollection{Benner2021,
  author = {Benner, Peter and Grivet-Talocia, Stefano and Quarteroni, Alfio and Rozza, Gianluigi and Schilders, Wil and Silveira, Luis},
  title = {Model order reduction: basic concepts and notation},
  booktitle = {System- and Data-Driven Methods and Algorithms},
  publisher = {De Gruyter},
  pages = {1--14},
  year = {2021},
  doi = {10.1515/9783110498967-001}
}

@article{Lopez2015,
  author = {L{\'o}pez C., Diana C. and Barz, Tilman and K{\"o}rkel, Stefan and Wozny, G{\"u}nter},
  title = {Nonlinear ill-posed problem analysis in model-based parameter estimation and experimental design},
  journal = {Computers \& Chemical Engineering},
  volume = {77},
  pages = {24--42},
  year = {2015},
  doi = {10.1016/j.compchemeng.2015.03.002}
}

@article{Plate2026,
  author = {Plate, Christoph and Martensen, Carl Julius and Sager, Sebastian},
  title = {Optimal Experimental Design for Universal Differential Equations},
  journal = {IEEE Transactions on Automatic Control},
  volume = {71},
  pages = {1521--1536},
  year = {2026},
  doi = {10.1109/TAC.2025.3609533}
}

@inproceedings{Sager2006,
  author = {Sager, Sebastian and Bock, Hans Georg and Diehl, Moritz and Reinelt, Gerhard and Schl{\"o}der, Johannes P.},
  title = {Numerical Methods for Optimal Control with Binary Control Functions Applied to a Lotka-Volterra Type Fishing Problem},
  booktitle = {Recent Advances in Optimization},
  pages = {269--289},
  year = {2006},
  doi = {10.1007/3-540-28258-0_17}
}

@article{Andersson2019,
  author = {Andersson, Joel A. E. and Gillis, Joris and Horn, Greg and Rawlings, James B. and Diehl, Moritz},
  title = {{CasADi} -- A software framework for nonlinear optimization and optimal control},
  journal = {Mathematical Programming Computation},
  volume = {11},
  pages = {1--36},
  year = {2019},
  doi = {10.1007/s12532-018-0139-4}
}

@article{LANEVE2010,
  author = {Laneve, Cosimo and Lascu, Tudor A. and Sordoni, Vania},
  title = {The Interval Analysis of Multilinear Expressions},
  journal = {Electronic Notes in Theoretical Computer Science},
  volume = {267},
  pages = {43--53},
  year = {2010},
  doi = {10.1016/j.entcs.2010.09.017}
}

@article{Huan_Jagalur_Marzouk_2024,
  author = {Huan, Xun and Jagalur, Jayanth and Marzouk, Youssef},
  title = {Optimal experimental design: Formulations and computations},
  journal = {Acta Numerica},
  volume = {33},
  pages = {715--840},
  year = {2024},
  doi = {10.1017/S0962492924000023}
}

@article{Alexanderian2014,
  author = {Alexanderian, Alen and Petra, Noemi and Stadler, Georg and Ghattas, Omar},
  title = {A-Optimal Design of Experiments for Infinite-Dimensional Bayesian Linear Inverse Problems with Regularized l0-Sparsification},
  journal = {SIAM Journal on Scientific Computing},
  volume = {36},
  pages = {A2122--A2148},
  year = {2014},
  doi = {10.1137/130933381}
}

@article{Haber2008,
  author = {Haber, Eldad and Horesh, Lior and Tenorio, Luis},
  title = {Numerical Methods for Experimental Design of Large-Scale Linear Ill-Posed Inverse Problems},
  journal = {Inverse Problems},
  volume = {24},
  pages = {055012},
  year = {2008},
  doi = {10.1088/0266-5611/24/5/055012}
}

@inproceedings{Zhou2024,
  author = {Zhou, Kangjie and Wu, Pengying and Su, Yao and Gao, Han and Ma, Ji and Liu, Hangxin and Liu, Chang},
  title = {{ASPIRe}: An Informative Trajectory Planner with Mutual Information Approximation for Target Search and Tracking},
  booktitle = {2024 IEEE International Conference on Robotics and Automation (ICRA)},
  pages = {4626--4632},
  year = {2024},
  doi = {10.1109/ICRA57147.2024.10611500}
}

@book{Villani2009,
  author = {Villani, C{\'e}dric},
  title = {Optimal Transport: Old and New},
  publisher = {Springer},
  year = {2009},
  doi = {10.1007/978-3-540-71050-9}
}

@article{Lewi2009,
  author = {Lewi, Jakub and Butera, Robert J. and Paninski, Liam},
  title = {Sequential Optimal Design of Neurophysiology Experiments},
  journal = {Neural Computation},
  volume = {21},
  pages = {619--687},
  year = {2009},
  doi = {10.1162/neco.2008.08-07-594}
}

@article{Long2013,
  author = {Long, Quan and Scavino, Marco and Tempone, Ra{\'u}l and Wang, Suojin},
  title = {Fast estimation of expected information gains for Bayesian experimental designs based on Laplace approximations},
  journal = {Computer Methods in Applied Mechanics and Engineering},
  volume = {259},
  pages = {24--39},
  year = {2013},
  doi = {10.1016/j.cma.2013.02.017}
}

@article{Drovandi2014,
  author = {Drovandi, Christopher C. and McGree, James M. and Pettitt, Anthony N.},
  title = {A Sequential Monte Carlo Algorithm to Incorporate Model Uncertainty in Bayesian Sequential Design},
  journal = {Journal of Computational and Graphical Statistics},
  volume = {23},
  pages = {3--24},
  year = {2014},
  doi = {10.1080/10618600.2012.730083}
}

@article{Cavagnaro2010,
  author = {Cavagnaro, Daniel R. and Myung, Jay I. and Pitt, Mark A. and Kujala, Janne V.},
  title = {Adaptive Design Optimization: A Mutual Information-Based Approach to Model Discrimination in Cognitive Science},
  journal = {Neural Computation},
  volume = {22},
  pages = {887--905},
  year = {2010},
  doi = {10.1162/neco.2009.02-09-959}
}

@article{Krause2008,
  author = {Krause, Andreas and Singh, Ajit and Guestrin, Carlos},
  title = {Near-Optimal Sensor Placements in Gaussian Processes: Theory, Efficient Algorithms and Empirical Studies},
  journal = {Journal of Machine Learning Research},
  volume = {9},
  pages = {235--284},
  year = {2008}
}

@article{Ryan2003,
  author = {Ryan, Kenneth J.},
  title = {Estimating Expected Information Gains for Experimental Designs with Application to the Random Fatigue-Limit Model},
  journal = {Journal of Computational and Graphical Statistics},
  volume = {12},
  pages = {585--603},
  year = {2003},
  doi = {10.1198/1061860032012}
}

@book{Lindley1972,
  author = {Lindley, Dennis V.},
  title = {Bayesian Statistics: A Review},
  publisher = {SIAM},
  year = {1972},
  doi = {10.1137/1.9781611970654}
}

@book{RaiffaSchlaifer1961,
  author = {Raiffa, Howard and Schlaifer, Robert},
  title = {Applied Statistical Decision Theory},
  publisher = {Harvard University Press},
  year = {1961}
}

@article{Sager2013,
  author = {Sager, Sebastian},
  title = {Sampling Decisions in Optimum Experimental Design in the Light of Pontryagin's Maximum Principle},
  journal = {SIAM Journal on Control and Optimization},
  volume = {51},
  pages = {3181--3207},
  year = {2013},
  doi = {10.1137/110835098}
}

@book{PronzatoPazman2013,
  author = {Pronzato, Luc and P{\'a}zman, Andrej},
  title = {Design of Experiments in Nonlinear Models: Asymptotic Normality, Optimality Criteria and Small-Sample Properties},
  publisher = {Springer},
  year = {2013},
  doi = {10.1007/978-1-4614-6363-4}
}

@book{WalterPronzato1997,
  author = {Walter, Eric and Pronzato, Luc},
  title = {Identification of Parametric Models from Experimental Data},
  publisher = {Springer},
  year = {1997},
}

@article{Fisher1922,
  author = {Fisher, R. A.},
  title = {On the mathematical foundations of theoretical statistics},
  journal = {Philosophical Transactions of the Royal Society of London, Series A},
  volume = {222},
  pages = {309--368},
  year = {1922},
  doi = {10.1098/rsta.1922.0009}
}

@book{Atkinson2007,
  author = {Atkinson, A. C. and Donev, A. N. and Tobias, R. D.},
  title = {Optimum Experimental Designs, with SAS},
  publisher = {Oxford University Press},
  year = {2007},
  doi = {10.1093/oso/9780199296590.001.0001}
}

@book{pukelsheim2006optimal,
  author = {Pukelsheim, Friedrich},
  title = {Optimal Design of Experiments},
  publisher = {SIAM},
  year = {2006},
  doi = {10.1137/1.9780898719109}
}

@article{Kiefer2018,
  author = {Kiefer, J.},
  title = {Optimum Experimental Designs},
  journal = {Journal of the Royal Statistical Society: Series B (Methodological)},
  volume = {21},
  pages = {272--304},
  year = {1959},
  doi = {10.1111/j.2517-6161.1959.tb00338.x}
}

@book{coverthomas1991,
  author = {Cover, Thomas M. and Thomas, Joy A.},
  title = {Elements of Information Theory},
  publisher = {Wiley},
  year = {1991},
  doi = {10.1002/0471200611}
}

@article{chalonerverdinelli1995,
  author = {Chaloner, Kathryn and Verdinelli, Isabella},
  title = {Bayesian Experimental Design: A Review},
  journal = {Statistical Science},
  volume = {10},
  pages = {273--304},
  year = {1995},
  doi = {10.1214/ss/1177009939}
}

@article{Ryan2016,
  author = {Ryan, Elizabeth G. and Drovandi, Christopher C. and McGree, James M. and Pettitt, Anthony N.},
  title = {A Review of Modern Computational Algorithms for Bayesian Optimal Design},
  journal = {International Statistical Review},
  volume = {84},
  pages = {128--154},
  year = {2016},
  doi = {10.1111/insr.12107}
}

@article{huanmarzouk2013,
  author = {Huan, Xun and Marzouk, Youssef M.},
  title = {Simulation-Based Optimal Bayesian Experimental Design for Nonlinear Systems},
  journal = {Journal of Computational Physics},
  volume = {232},
  pages = {288--317},
  year = {2013},
  doi = {10.1016/j.jcp.2012.08.013}
}
